\documentclass[letterpaper,12pt]{amsart}
\usepackage[left=1in,right=1in,top=1in,bottom=1in]{geometry}
\usepackage{pgfplots}
\usepackage{tikz, tikz-cd}
\usepackage{qtree}
\usepackage{forest}

\forestset{bintree/.style={for tree={calign=fixed edge angles, grow=south, edge=very thick},
     delay={where content={}{shape=coordinate, for current and siblings={anchor=south}}{}}}}

\forestset{bigbintree/.style={for tree={calign=fixed edge angles, grow=south},
     delay={where content={}{shape=coordinate, for current and siblings={anchor=south}}{}}}}

\newcommand{\bintree}[1]{\vcenter{\hbox{\scalebox{.5}{\tiny\begin{forest}bintree #1 \end{forest}}}}}
\newcommand{\bigbintree}[1]{\vcenter{\hbox{\begin{forest}bigbintree #1 \end{forest}}}}

\newcommand{\tinytree}[1]{
\vcenter{\hbox{
\scalebox{0.4}{
\begin{forest}
  for tree={circle,fill,}
  #1
\end{forest}
}}}}

\usepackage{graphicx}
\usepackage{float}
\usepackage{listings}
\usepackage{multicol}
\usepackage{soul}
\usepackage{pdfpages}

\tikzset{level distance=.7cm,sibling distance=0.5cm}
\usepackage{amsmath, amsfonts, amsthm, amssymb, mathtools, bbold}

\usetikzlibrary{shapes.geometric}

\makeatletter
\newcommand*{\trinum}{}
\DeclareRobustCommand*{\trinum}[1]{
  \ensuremath{
    \mathpalette\@trinum{#1}
  }
}
\newdimen\trinum@sep
\newdimen\trinum@rule
\newcommand*{\@trinum}[2]{
  \setlength{\trinum@rule}{.8\trinum@sep}
  \tikz\node[
    regular polygon,
    regular polygon sides=3,
    draw,
    line width=\trinum@rule,
    inner sep=-0.2mm,
    scale=0.8
  ]{$\m@th#1#2$};
}
\makeatother

\let\oldexists\exists
\let\exists\relax
\let\oldforall\forall
\let\forall\relax

\DeclareMathOperator{\exists}{\oldexists}
\DeclareMathOperator{\forall}{\oldforall}

\usepackage{amsthm}
\newtheorem{definition}{Definition}[section]

\newtheorem{proposition}{Proposition}[section]
\newtheorem{lemma}{Lemma}[section]

\newtheorem{theorem}{Theorem}[section]

\theoremstyle{remark}
\newtheorem{example}{Example}[section]
\newtheorem{remark}{Remark}[section]

\theoremstyle{definition}

\renewcommand{\tilde}{\widetilde}

\renewcommand{\epsilon}{\varepsilon}

\renewcommand{\phi}{\varphi}

\newcommand{\cB}{\mathcal{B}}
\newcommand{\cC}{\mathcal{C}}
\newcommand{\cG}{\mathcal{G}}

\newcommand{\cH}{\mathcal{H}}
\newcommand{\cL}{\mathcal{L}}
\newcommand{\cM}{\mathcal{M}}

\newcommand{\cT}{\mathcal{T}}
\newcommand{\cU}{\mathcal{U}}

\DeclareFontFamily{U}{wncy}{}
\DeclareFontShape{U}{wncy}{m}{n}{<->wncyr10}{}
\DeclareSymbolFont{mcy}{U}{wncy}{m}{n}
\DeclareMathSymbol{\Sha}{\mathord}{mcy}{"58}

\newcommand{\fS}{\mathfrak{S}}

\newcommand{\fN}{\mathfrak{N}}
\newcommand{\fP}{\mathfrak{P}}

\newcommand{\fM}{\mathfrak{M}}
\newcommand{\fF}{\mathfrak{F}}
\newcommand{\one}{\mathbb{1}}

\newcommand{\CoII}[2]{ \Tree [ #1 #2 ] }
\newcommand{\CoIII}[3]{ \Tree [ [ #1 #2 ] #3 ] }

\DeclareMathOperator{\Adm}{Adm}
\DeclareMathOperator{\Orb}{Orb}
\DeclareMathOperator{\Stab}{Stab}
\DeclareMathOperator{\Sym}{Sym}

\newcommand{\ins}{\triangleleft}

\begin{document}

\title{A Hopf algebra on nonplanar binary forests}

\newcommand{\thisdate}{\today}

\author{Elizabeth Xiao}
\address{Department of Mathematics, California Institute of Technology, 1200 E California Blvd, Pasadena, CA 91125}
\email{exiao@caltech.edu}

\begin{abstract}
We equip the graded polynomial algebra generated by nonplanar rooted binary trees with a Hopf algebra structure by defining a coproduct which disallows cutting both children of any given vertex, refining Connes-Kreimer's notion of admissible cuts. We show that the terms in this coproduct have an additional combinatorial interpretation in terms of subsets of leaves, which facilitates the construction of Hopf algebra morphisms involving this Hopf algebra, and creates a connection with a Hopf algebra of Bruned used in the renormalization of stochastic processes. Finally, we show that this Hopf algebra is dual to the universal enveloping algebra of a Lie algebra arising from a pre-Lie operator on binary trees based on edge-insertion.
\end{abstract}

\maketitle

It is generally believed that each type of combinatorial object has a unique natural decomposition associated with it, which preserves some notion of cardinality or grading. We present a coproduct operation which we argue defines a natural decomposition on nonplanar (or abstract) binary forests, which are the fundamental objects in Marcolli, Chomsky and Berwick's mathematical model of Merge, in which the forests represent workspaces of syntactic objects \cite{marcolli2025mathematical}.

Nonplanar binary trees with labelled leaves have also appeared in the literature as phylogenetic trees \cite{billera2001geometry}. They are used to represent a binary operation which is commutative but not associative.

The paper is organized as follows. In Section \ref{prelim}, we introduce the vector space of rooted trees and its symmetric extension to the vector space of forests. These will be the underlying vector spaces for all algebras appearing in this paper. We recall definitions of a Hopf algebra and a pre-Lie algebra, fixing our notation along the way. We keep the Connes-Kreimer Hopf algebra of renormalization in mind as a running example.

In Section \ref{newhopf}, we define a Hopf algebra structure on binary forests in terms of binary-admissible cuts, which disallow cutting both children of any given vertex. In Section \ref{newlie}, we define a pre-Lie operator on binary trees based on insertion into edges. By the Poincar\'{e}-Birkhoff-Witt theorem, the universal enveloping algebra of this pre-Lie algebra is itself a Hopf algebra, which turns out to be dual to the Hopf algebra described in Section \ref{newhopf}. The notion of duality is based on a pairing defined in Section \ref{duality}, where we prove duality by considering the multiplicities of cuts and insertions.

Finally, in Section $\ref{consequences}$ we explore some further consequences of the dual pairing and use it to calculate the coefficients of the pre-Lie exponential of the single-vertex tree.

\section{Preliminaries} \label{prelim}

\subsection{Rooted trees} We assume that graphs are abstract in the sense that we do not fix a planar embedding. A graph is connected if there exists a path between any pair of vertices. A cycle is a path with no repeating edges where the first and last vertices are the same.

A tree is a connected graph with no cycles. A nonplanar rooted tree is an abstract tree graph with a distinguished root vertex. We will draw all trees so that the root is at the top. Choosing a root induces a directed graph structure on a tree; all edges will be oriented to point away from the root.

A forest is a graph whose connected components are trees. Henceforth, we will use the term tree to refer exclusively to nonplanar rooted trees; likewise, a forest will be a forest consisting of nonplanar rooted trees. We can identify forests with multisets of trees.

If $F$ is a forest, we write $V(F)$ for the vertex set of $F$; $E(F)$ for the edge set; $L(F) \subseteq V(F)$ for the subset of leaves, which are vertices of degree one.

Trees may have labels on all or some of its vertices and edges. Most of the trees we examine will only have labels on the leaves; any additional labels will only be used to clarify examples and aid certain proofs.

Given a vertex $v \in V(F)$, the vertex immediately above $v$ is called its \emph{parent}, and the vertices below $v$ are called its \emph{children}. Any vertex below $v$ is called a \emph{descendant} of $v$. If $w \in V(F)$ is a child of $v$, we may refer to the whole \emph{subtree} starting from $w$ as a child of $v$.

Let $\cT$ be some family of rooted nonplanar trees. Let $k$ be a field of characteristic $0$. We write $k\{\cT\}$ for the vector space over $k$ generated by the basis $\cT$. We identify $k[\cT]$ with $\Sym(k\{\cT\})$, the $k$-algebra of polynomials of trees, which is generated as a vector space by forests whose components are in $\cT$.

\subsection{Hopf algebras}
We refer to \cite{CartierPatras} for a thorough description of algebras, coalgebras and Hopf algebras; this section summarizes the main terminology. Let $k$ be a field of characteristic $0$. A \emph{bialgebra} $\cB = (\cB, m, \eta, \Delta, \epsilon)$ over $k$ consists of the following data:
\begin{itemize}
  \item the underlying $k$-vector space $\cB$;
  \item a product $m : \cB \otimes \cB \to \cB$ with unit $\eta : k \to \cB$ (an unitary algebra structure);
  \item a coproduct $\Delta : \cB \to \cB \otimes \cB$ with counit $\epsilon : \cB \to k$ (a counitary coalgebra structure),
\end{itemize}
such that $\Delta$ and $\epsilon$ are morphisms of algebras. We often use \emph{Sweedler notation} to index the coproduct of an element $x \in \cB$ as a sum:
\[ \Delta(x) = \sum_{(x)} x' \otimes x''. \]

Since the unit and counit are assumed to exist, we can write $\cB$ as a triple $\cB = (\cB, m, \Delta)$. We also typically assume that $m$ is associative and $\Delta$ is coassociative. When there is no confusion we generally write $m(x,y)$ as $xy$.

We say $\cB$ is graded as a bialgebra if its underlying vector space is graded; $(\cB, m)$ is graded as an algebra; and $(\cB,\Delta)$ is graded as a coalgebra, in the sense that:
\begin{itemize}
  \item if $x \in \cB_n$ and $y \in \cB_m$, then $m(x,y) \in \cB_{n+m}$;
  \item if $x \in \cB_n$, then $\Delta(x) \in \bigoplus_{k=0}^n \cB_k \otimes \cB_{n-k}$.
\end{itemize}
A graded bialgebra $\cB$ is \emph{connected} if $\cB_0 \cong k$. It is \emph{locally finite} if $\dim \cB_n$ is finite for all $n$.

A \emph{Hopf algebra} $\cH$ is a bialgebra with an antipode map $S : \cH \to \cH$ such that
\[ \eta(\epsilon(x)) =\sum_{(x)} S(x') x'' = \sum_{(x)} x' S(x'') \]
for all $x \in \cH$. If a bialgebra has an antipode, then it is unique. The antipode is an antihomomorphism: for $x, y \in \cH$,
\[ S(xy) = S(y) S(x). \]
If $\cH$ is commutative, then $S$ is a homomorphism.

If $\cB$ is a graded connected bialgebra, then it automatically has an antipode defined recursively by $S(1) = 1$ and
\[ S(x) = -x - \sum_{(x)} S(x') x'' \]
for $x \in \bigoplus_{n=1}^\infty \cB_n$. Thus, any graded connected bialgebra is automatically a graded Hopf algebra.

\subsection{The Connes-Kreimer Hopf algebra} \label{ck}

Let $\cT$ be the set of all unlabelled nonplanar rooted trees. The Connes-Kreimer Hopf algebra $\cH_{CK}$ is defined on the vector space $k[\cT]$. Its product is defined by disjoint unions of forests, which is just the polynomial product on $k[\cT]$. Its coproduct is $\Delta_{CK}$ defined in terms of \emph{admissible cuts} on trees and extended to forests by multiplicativity.

An admissible cut of $T$ is a subset $C \subseteq E(T)$ such that no more than one $e \in C$ appears on a path from the root to any leaf. Consider the forest $F$ produced by deleting all of the edges in $C$ from $T$. This induces a pair $P^C(T) \otimes R^C(T)$, where $R^C(T)$ is the unique tree in $F$ containing the root of $T$, while $P^C(T)$ is a forest of trees lying below the cut edges. The empty cut $C = \varnothing$ is admissible and corresponds to the pair $1 \otimes T$. The set of all admissible cuts in $T$ is called $\Adm(T)$.  In addition to all the admissible cuts, there is also a \emph{total cut} contributing the term $T \otimes 1$.

The Connes-Kreimer coproduct is indexed over all such cuts:
\[ \Delta_{CK}(T) = T \otimes 1 + \sum_{C \in \Adm(T)} P^C(T) \otimes R^C(T) \]

$\cH_{CK}$ is graded by the number of vertices in a forest. It is connected with respect to this grading.

\begin{example}
  Let $T = \tinytree{[[][[]]]}$. Then
  \[
    \Delta_{CK}(T) = T \otimes 1 + 1 \otimes T
     + \bullet \otimes \tinytree{[[[]]]}
     + \bullet \otimes \tinytree{[[][]]}
     + \tinytree{[[]]} \otimes \tinytree{[[]]}
     x+ \bullet \bullet \otimes \tinytree{[[]]} + \bullet \tinytree{[[]]} \otimes \bullet.
  \]
\end{example}

The Connes-Kreimer Hopf algebra was introduced in \cite{connes1999hopf} to describe the structure of renormalization in quantum field theory. Accessible introductions to $\cH_{CK}$ are included in \cite{foissy2013introduction}, \cite{hoffman2003combinatorics}, \cite{yeats2017combinatorial}.

\begin{remark}
  Once we specialize to full binary trees, the vertices will not be explicitly illustrated in our diagrams. However, we will continue to use $\bullet$ to represent the unlabelled single-vertex tree. Labelled single-vertex trees are illustrated by their label.
\end{remark}

\subsection{Variations of Connes-Kreimer for binary trees}

In \cite{marcolli2025mathematical}, Marcolli, Chomsky and Berwick propose three Connes-Kreimer-like coproducts for binary forests. Each of the coproducts is indexed by admissible cuts in the sense of Connes-Kreimer and the pruned forest in the left channel is the same for all three, but each coproduct uses a different notion of quotient in the remainder term in the right channel. They are
\begin{enumerate}
  \item $\Delta^c$, the contraction coproduct, where each subtree in the pruned forest is contracted to a single vertex carrying a trace of the original subtree.
  \item $\Delta^\rho$, the restriction of the Connes-Kreimer coproduct $\Delta_{CK}$ to binary forests.
  \item $\Delta^d$, the deletion coproduct, where the quotient in $\Delta^\rho$ is projected to the unique maximal binary tree obtained by edge contraction.
\end{enumerate}
Note that $\Delta^c$ and $\Delta^d$ are closed under binary trees, while $\Delta^\rho$ is not, since the remainder after cutting is in general not binary.

\begin{example}
 We apply $\Delta^c$ and $\Delta^d$ to $T = \CoIII{$\alpha$}{$\beta$}{$\gamma$}$.
 
 The contraction coproduct is
 \begin{align*}
   \Delta^c(T) =
   T \otimes 1 &+ 1 \otimes T
   + \alpha \otimes \CoIII{\st{$\alpha$}}{$\beta$}{$\gamma$}
   + \beta \otimes \CoIII{$\alpha$}{\st{$\beta$}}{$\gamma$}
   + \gamma \otimes \CoIII{$\alpha$}{$\beta$}{\st{$\gamma$}} 
   + \CoII{$\alpha$}{$\beta$} \otimes \CoII{\st{$\{\alpha,\beta\}$}}{$\gamma$} \\
    &+ \alpha\sqcup\beta \otimes \CoIII{\st{$\alpha$}}{\st{$\beta$}}{$\gamma$}
    + \alpha\sqcup\gamma \otimes \CoIII{\st{$\alpha$}}{$\beta$}{\st{$\gamma$}} 
    + \beta\sqcup\gamma \otimes \CoIII{$\alpha$}{\st{$\beta$}}{\st{$\gamma$}} \\
    &+ \alpha\sqcup\beta\sqcup\gamma \otimes \CoIII{\st{$\alpha$}}{\st{$\beta$}}{\st{$\gamma$}},
 \end{align*}
 while the deletion coproduct produces
  \begin{align*}
   \Delta^d(T) =
   T \otimes 1 &+ 1 \otimes T
   + \alpha \otimes \CoII{$\beta$}{$\gamma$}
   + \beta \otimes \CoII{$\alpha$}{$\gamma$}
   + \gamma \otimes \CoII{$\alpha$}{$\beta$}
   + \CoII{$\alpha$}{$\beta$} \otimes \gamma \\
    &+ \alpha\sqcup\beta \otimes \gamma 
    + \alpha\sqcup\gamma \otimes \beta
    + \beta\sqcup\gamma \otimes \alpha
    + \alpha\sqcup\beta\sqcup\gamma \otimes 1.
 \end{align*}
\end{example}

\subsubsection{Issues with $\Delta^c$}
The contraction coproduct is coassociative: it is in fact the restriction of the Connes-Kreimer Hopf algebra of Feynman graphs to binary trees. This coproduct respects a grading on $\cH_{bin}$ by the number of edges. The odd degree components of the grading are zero, since all full binary trees have an even number of edges.

However, the resulting graded algebra is not connected, since any single-vertex tree has degree $0$. Furthermore, the graded algebra is not even locally finite as a graded vector space, since an arbitrary number of single-vertex trees may be multiplied to an existing tree of degree $2k$ to obtain infinitely many linearly independent elements of degree $2k$. One potential resolution is to quotient this algebra by the ideal generated by $1-\alpha$, where $\alpha$ runs over all single-vertex trees, which produces a connected bialgebra.

Another barrier to local finiteness is the fact that each quotient carries a trace, meaning that infinitely many leaf labels can be generated by taking the coproduct of larger trees. One potential resolution to this is by equipping each tree with a head function and using the values from a head function instead of the trace. Head functions are described in Sections 1.13.3 and 3.4 of \cite{marcolli2025mathematical}.

\subsubsection{Issues with $\Delta^d$}
The deletion coproduct is not coassociative: although $(\Delta^d \otimes \one) \circ \Delta^d$ and $(\one \otimes \Delta^d) \circ \Delta^d$ enumerate the same terms, they may appear with different multiplicity.

For a thorough discussion of these coproducts, including their linguistic interpretations, see Section 1.2 of \cite{marcolli2025mathematical}.

\subsection{Pre-Lie algebras}

Let $\cL$ be a vector space. A bilinear product $\ins : \cL \times \cL \to \cL$ is called a \emph{(right) pre-Lie operator} if it satisfies the \emph{(right) Vinberg identity}
\[
   (x \ins y) \ins z - x \ins (y \ins z) =  (x \ins z) \ins y - x \ins (z \ins y),
\]
making $(\cL,\ins)$ a \emph{(right) pre-Lie algebra}. The expression $(x \ins y) \ins z - x \ins (y \ins z)$ is  the \emph{associator} $A(x,y,z)$ with respect to $\ins$; if $\ins$ is associative, then $A(x,y,z) = 0$ uniformly. The Vinberg identity asserts that the associator is unchanged when the second and third arguments are swapped; that is, for all $x,y,z \in \cL$,
\[ A(x,y,z) = A(x,z,y). \]

A pre-Lie operator induces a Lie bracket defined by
\[ [x,y] := x \ins y - y \ins z, \]
which automatically satisfies antisymmetry and the Jacobi identity, making $\cL$ a Lie algebra. Not all Lie algebras, however, are produced from a pre-Lie algebra. Chapter 6 of \cite{CartierPatras} provides a thorough exposition of pre-Lie algebras.

\subsection{The vertex-insertion pre-Lie algebra}

With $\cT$ as in Section \ref{ck}, let us define a insertion operation on trees. If $T, S \in \cT$ and $v \in V(T)$, define $T \ins_v S$ as the result of creating an edge that joins $v$ to the root of $S$. Then $\ins$ is the sum of all such vertex-insertions
\[ T \ins S = \sum_{v \in V(T)} T \ins_v S. \]
By extending $\ins$ to $k\{\cT\}$, we equip $k\{\cT\}$ with a pre-Lie algebra structure.
\begin{example}
  Let $T = \tinytree{[[][[]]]}$ and $S = \bullet$. Then
  \[ T \ins S = \tinytree{[[][[]][]]} + \tinytree{[[[]][[]]]} + \tinytree{[[][[][]]]} + \tinytree{[[][[[]]]]}. \]
\end{example}
There is an extension of $\ins$ to a noncommutative product $\ast$ on $k[\cT]$, and a cocommutative coproduct $\Sha$ given by unshuffling of monomials; this is one presentation of the universal enveloping algebra $\cU(\cL)$ of the Lie algebra induced by the pre-Lie algebra $(k\{\cT\}, \ins)$. As a consequence of the Poincar\'{e}-Birkhoff-Witt theorem, $\cU(\cL)$ is always a Hopf algebra.  We call $\cH_{GL} = \cU(\cL)= (k[\cT], \ast, \Sha)$ the Grossman-Larson Hopf algebra on trees.

We will not describe the extension of $\ast$ here; it is the result of the general Guin-Oudom construction, which is given in Section \ref{newlie}.

\section{The Hopf algebra of subsets of leaves} \label{newhopf}

We now restrict $\cT$ to the set of binary rooted nonplanar trees, so that $k[\cT]$ is generated as a vector space by binary forests. We will also use $\cT_n$ to refer to the set of binary trees with $n$ leaves. In this section, we will define a Hopf algebra on $k[\cT]$ and refer to it as $\cH = (\cH, \sqcup, \Delta)$. 

\begin{remark}
  If trees in $\cT$ are unlabelled, then there are $|\cT_n| =W_n$ distinct binary trees with $n$ leaves, where $W_n$ is the $n$th Wedderburn-Etherington number (OEIS A001190 \cite{OEIS}). The first values of $W_n$, starting at $W_1$, are
   \[ 1, 1, 1, 2, 3, 6, 11, 23, \dots \]
   corresponding to
   \begin{align*}
     \cT_1 &= \{ \bullet \}, \quad
     \cT_2 = \{ \bintree{[[][]]} \}, \quad
     \cT_3 = \Big\{ \bintree{[[[][]][]]} \Big\}, \quad
     \cT_4 = \Bigg\{ \bintree{[[[[][]][]][]]},  \bintree{[[[][]][[][]]]} \Bigg\}, \\
     \cT_5 &= \Bigg\{ \bintree{[[[[[][]][]][]][]]},  \bintree{[[[[][]][]][[][]]]}, \bintree{[[[[][]][[][]]][]]} \Bigg\}, \dots
   \end{align*}

   $\cH_n$ is generated as a vector space by forests with $n$ leaves. We identify $\cH_0$ with $k$ since it is generated by the unique empty tree (meaning $\cH$ is connected). The values of $\dim \cH_n$, starting from $n=1$, are (OEIS A003214 \cite{OEIS}):
   \[ 1, 1, 2, 3, 6, 10, 20, 37, \dots \]
\end{remark}

The product $\sqcup$ is inherited from the polynomial algebra; the notation indicates that it produces disjoint unions of forests. The forests generating $\cH$ are graded by the number of leaves and the coproduct respects this grading.

The coproduct $\Delta$ on $\cH$ is indexed over all admissible cuts, in the sense of Connes-Kreimer, which do not sever both children of any given vertex, in additional to the unique total cut. The cuts of the first type will be called \emph{binary-admissible cuts}.

\begin{definition}
A \emph{binary-admissible cut} of $T$ is a subset $C \subseteq E(T)$ of edges such that
\begin{enumerate}
\item no pair of edges in $C$ appear on the same path from root to leaf,
\item no two edges in $C$ share the same parent vertex.
\end{enumerate}
Conversely, any subset of $E(T)$ satisfying these two properties defines a binary-admissible cut.
\end{definition}

Let $\Delta_{CK}$ denote the Connes-Kreimer coproduct. Let $T$ be a binary, rooted, non-planar tree: we will imagine that all of its original leaves are labelled (or have some indication that they were originally present), and any new leaves appearing in the second channel of $\Delta_{CK}(T)$ are unlabelled (or have some indication that they are novel, i.e. created from a cut).

\begin{align*}
  \Delta ( \Tree[ [ $\alpha$ $\beta$ ] $\gamma$ ] )
  =
    1 \otimes \Tree[ [ $\alpha$ $\beta$ ] $\gamma$ ]
  &+ \alpha \otimes \Tree [ $\beta$ $\gamma$ ]
  + \beta \otimes \Tree [ $\alpha$ $\gamma$ ]
  + \gamma \otimes \Tree [ $\alpha$ $\beta$ ] \\
  &+ \Tree[ $\alpha$ $\beta$ ] \otimes \gamma
  + \alpha \sqcup \gamma \otimes \beta
  + \beta \sqcup \gamma \otimes \alpha
  + \Tree[ [ $\alpha$ $\beta$ ] $\gamma$ ] \otimes 1
\end{align*}
In order, the terms respectively correspond to the subsets $\varnothing, \{\alpha\}, \{\beta\}, \{\gamma\}, \{\alpha,\beta\},\{\alpha,\gamma\}$ and $\{\alpha,\beta,\gamma\}$.

Let $\Phi$ be a pruning map which maps all trees with novel leaves to $0$, and let $\Pi_{d,\rho}$ be the projection map which sends an at-most binary tree to the unique full binary tree obtained by edge contraction. If the root of a tree with at least two vertices is a novel leaf, then $\Pi_{d,\rho}$ contracts the root, upon which the root label is lost. In particular, when $\Pi_{d,\rho}$ acts on a path with at least one edge, then the result is a single vertex carrying the label of the bottom leaf vertex. This ensures that the correct terms are then deleted by $\Phi$.

Then $\Delta$ is a composition
\[ \Delta = (\one \otimes \Phi) \circ (\one \otimes \Pi_{d,\rho}) \circ \Delta_{CK}. \]

\begin{example}
We will show an example of this sequence of compositions. Let
\[ T = \Tree[ [ $\alpha$ $\beta$ ] $\gamma$ ]. \]
Novel leaves produced by the Connes-Kreimer coproduct are marked by $\ast$.
\begin{align*}
  \Delta^{CK} ( T )
  = 1 \otimes \Tree[ [ $\alpha$ $\beta$ ] $\gamma$ ]
  &+ \Tree[ [ $\alpha$ $\beta$ ] $\gamma$ ] \otimes 1
  + \alpha \otimes \Tree [ .$\ast$ [ $\beta$ $\gamma$ ] ]
  + \beta \otimes \Tree [ [ $\alpha$ ] $\gamma$ ]
  + \gamma \otimes \Tree [ .$\ast$ [ $\alpha$ $\beta$ ] ] \\
  &+ \Tree[ $\alpha$ $\beta$ ] \otimes \Tree [ .$\ast$ $\gamma$ ]
  + \alpha \sqcup \beta \otimes \Tree [ $\ast$ $\gamma$ ]
  + \alpha \sqcup \gamma \otimes \Tree [ .$\ast$ [ .$\cdot$ $\beta$ ] ]
  + \beta \sqcup \gamma \otimes \Tree [ .$\ast$ [ .$\cdot$ $\alpha$ ] ] \\
  &+  \Tree[ $\alpha$ $\beta$ ] \sqcup \gamma \otimes \ast
  + \alpha \sqcup \beta \sqcup \gamma \otimes \Tree [ .$\ast$ $\ast$ ]
\end{align*}
Upon applying $\Pi_{d,\rho}$ to the second channel of the previous expression, we obtain
\begin{align*}
  (\one \otimes \Pi_{d,\rho}) (\Delta^{CK} ( T ))
  = 1 \otimes \Tree[ [ $\alpha$ $\beta$ ] $\gamma$ ]
  &+ \Tree[ [ $\alpha$ $\beta$ ] $\gamma$ ] \otimes 1
  + \alpha \otimes \Tree [ $\beta$ $\gamma$ ]
  + \beta \otimes \Tree [ $\alpha$ $\gamma$ ]
  + \gamma \otimes \Tree [  $\alpha$ $\beta$ ] \\
  &+ \Tree[ $\alpha$ $\beta$ ] \otimes \gamma
  + \alpha \sqcup \beta \otimes \Tree [ $\ast$ $\gamma$ ]
  + \alpha \sqcup \gamma \otimes \beta
  + \beta \sqcup \gamma \otimes \alpha \\
  &+  \Tree[ $\alpha$ $\beta$ ] \sqcup \gamma \otimes \ast
  + \alpha \sqcup \beta \sqcup \gamma \otimes \ast.
\end{align*}
Note, in particular, how $\Pi_{d,\rho}$ acts on the last two entries. Finally, $\Phi$ deletes all of the trees which still have novel leaves marked $\ast$ in the second channel, so
\begin{align*}
  (\one \otimes \Phi ) ((\one \otimes \Pi_{d,\rho}) (\Delta^{CK} ( T )))
  &= 1 \otimes \Tree[ [ $\alpha$ $\beta$ ] $\gamma$ ]
  + \Tree[ [ $\alpha$ $\beta$ ] $\gamma$ ] \otimes 1
  + \alpha \otimes \Tree [ $\beta$ $\gamma$ ]
  + \beta \otimes \Tree [ $\alpha$ $\gamma$ ] \\
  &\quad + \gamma \otimes \Tree [  $\alpha$ $\beta$ ]
  + \Tree[ $\alpha$ $\beta$ ] \otimes \gamma
  + \alpha \sqcup \gamma \otimes \beta
  + \beta \sqcup \gamma \otimes \alpha \\
  &= \Delta(T).
\end{align*}
\end{example}

\begin{proposition}
  The terms of $\Delta(T)$ are in bijection with subsets of leaves of $T$.
\end{proposition}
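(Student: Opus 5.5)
The plan is to index the terms of $\Delta(T)$ by cuts, attach to each cut the set of original leaves that end up in the left channel, and show that this assignment is a bijection onto the subsets of $L(T)$.

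\textbf{Step 1: which cuts survive.} First I will pin down exactly which summands of $\Delta_{CK}(T)$ survive the composition $(\one\otimes\Phi)\circ(\one\otimes\Pi_{d,\rho})$. Take an admissible cut $C$. In $R^C(T)$, a vertex that lost exactly one child has degree two and is contracted by $\Pi_{d,\rho}$. A vertex that lost both children becomes a novel leaf; it is never removed, so $\Phi$ sends the term to $0$. The one exception is the root case, where the remainder is a lone novel vertex and is again killed. Consequently the surviving terms are:
\begin{itemize}
\item the total cut $T\otimes 1$;
\item the terms $P^C(T)\otimes \Pi_{d,\rho}(R^C(T))$ for $C$ binary-admissible.
\end{itemize}
The empty cut gives $1\otimes T$. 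I will treat the summands of $\Delta(T)$ as indexed by this set of cuts, counted with multiplicity as they occur.

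\textbf{Step 2: the map.} Define $\lambda$ from these cuts to subsets of $L(T)$ by sending each cut to the set of leaves of $T$ lying in the left channel. Thus $\lambda(C)=L(P^C(T))$ for $C$ binary-admissible, and the total cut goes to $L(T)$. Call a vertex $v$ \emph{saturated} by $S\subseteq L(T)$ if every leaf descended from $v$ (including $v$ itself if it is a leaf) lies in $S$.

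\textbf{Step 3: surjectivity.} Given $S$, if the root is saturated, take the total cut. Otherwise, let $C_S$ be the set of edges entering the maximal saturated vertices. These vertices are pairwise incomparable, so condition (1) holds. Two of them cannot be siblings, since then their parent would be saturated, contradicting maximality; so condition (2) holds. The leaves under $C_S$ are precisely $S$, because every leaf of $S$ lies under some maximal saturated vertex.

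\textbf{Step 4: injectivity.} This is the step needing care. I will show that for a binary-admissible cut $C$ with $S=\lambda(C)$, the lower endpoints of the edges in $C$ are exactly the maximal $S$-saturated vertices, so $C=C_S$. Every such endpoint is saturated, and since the endpoints are incomparable, the only thing to prove is that no saturated vertex $v$ sits outside all cut subtrees. The root is not saturated, since otherwise $S=L(T)$ would force both root children to be cut, which is forbidden.

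So suppose $v$ is saturated and not below any cut edge. By condition (2), at least one child edge of $v$ is uncut. The child $c$ at the end of that edge is saturated and still not below any cut edge. Iterating descends to a leaf that lies in $S$ but not in $P^C(T)$, a contradiction. Finally, the total cut is the unique cut with image $L(T)$, because no binary-admissible cut can capture every leaf (this is the root observation above). This establishes the bijection.

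I expect Step 1, and specifically the careful bookkeeping of $\Pi_{d,\rho}$ in the root cases, to be the only place needing real care; Steps 3 and 4 are routine.
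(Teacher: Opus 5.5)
Your proof is correct, but it takes a different route from the paper. The paper argues by induction on the number of leaves. It splits $T$ at the root into $T_\ell$ and $T_r$, and identifies each non-total cut of $T$ with a pair of cuts of $T_\ell$ and $T_r$; either of these may itself sever the edge into its subtree. It likewise identifies subsets of $L(T)$ with pairs of subsets of $L(T_\ell)$ and $L(T_r)$. The total cut of $T$ then accounts for $L(T)=L(T_\ell)\sqcup L(T_r)$, which is exactly the one pair excluded because it would sever both children of the root. That last piece of bookkeeping is left implicit in the paper.

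You instead write the bijection down globally. A cut goes to the set of leaves it captures, and the inverse sends $S$ to the edges entering the maximal $S$-saturated vertices. Maximality and the sibling argument give admissibility of $C_S$, and a descent argument based on condition (2) gives injectivity. You also check from $(\one\otimes\Phi)\circ(\one\otimes\Pi_{d,\rho})\circ\Delta_{CK}$ that exactly the binary-admissible cuts and the total cut survive, which the paper simply takes as the description of $\Delta$.

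The induction is shorter and matches the recursive viewpoint the paper reuses for the antipode. Your version gives a closed-form description of the cut realizing a prescribed $S$: the pruned forest consists of the maximal subtrees all of whose leaves lie in $S$, and the remainder is the binary tree induced on $L(T)\setminus S$. This is essentially the restriction-species picture in Kock's remark.

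One small wording issue: your parenthetical reason that the root cannot be saturated (``would force both root children to be cut'') is not a direct implication. The actual justification is your descent argument applied with $v$ equal to the root, which is trivially not below any cut edge. You can simply replace the parenthetical with a pointer to that argument.
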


\begin{proof}
When we speak of $L(T)$, the set of leaves of $T$, we can implicitly assume that the leaves have distinct labels and identify each leaf with its label. Strictly speaking, if there are any repeated labels among the leaves of $T$, we assume that the leaves are distinct objects even if they carry the same label.

For a tree with a single leaf, the only cuts are the empty and total cuts, which correspond to the empty set and the singleton subset with the unique leaf.

Before proceeding with the inductive step, it is helpful to illustrate it with the example of a cherry tree $T = \Tree [ $\alpha$ $\beta$ ]$. The only non-trivial cuts of $T$ involve cutting either $\alpha$ or $\beta$. Both leaves cannot be simultaneously cut, as that would entail severing both children of the root. Hence
\[ \Delta(T) = 1 \otimes T + T \otimes 1 + \alpha \otimes \beta + \beta \otimes \alpha, \]
where the terms respectively correspond to $\varnothing, \{\alpha,\beta\}, \{\alpha\}$ and $\{\beta\}$.

Now let $T$ be a tree with $n\geq 3$ leaves and suppose that the result holds for all trees with fewer than $n$ leaves. Let $T_\ell$ and $T_r$ be the left and right subtrees of $T$ respectively. Since $T_\ell$ and $T_r$ have strictly fewer leaves than $G$, the inductive hypothesis applies to them.

 Consider a cut of $T$. Since we are not allowed to cut both children of the root, the total cut corresponds uniquely to $L(T) = L(T_\ell) \sqcup L(T_r)$. Any other cut of $T$ corresponds to a unique cut of $T_\ell$ and $T_r$ (either of which may be empty), and conversely. Likewise, any subset of $L(T)$ is the disjoint union of a subset of $L(T_\ell)$ with a subset of $L(T_r)$.
\end{proof}

\begin{theorem}
  $\Delta$ is coassociative.
\end{theorem}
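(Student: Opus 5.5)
Since $\Delta$ is multiplicative by construction, coassociativity on $\cH$ reduces to checking it on a single tree $T$. The plan is to use the leaf-subset description from the preceding Proposition, rewritten in closed form. For $A \subseteq L(T)$, let $T|_A$ be the full binary tree obtained by restricting $T$ to the leaves in $A$: take the minimal subtree spanning $A$ and the root, then contract the degree-two vertices, with $T|_\varnothing = 1$.

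\textbf{Step 1: a closed form for $\Delta(T)$.} I claim that the term indexed by $A$ is $P_A(T) \otimes T|_{L(T)\setminus A}$. Here $P_A(T)$ is the forest of maximal subtrees of $T$ all of whose leaves lie in $A$; equivalently, it is the restriction of $T$ to $A$ taken as a forest, with no contraction. By construction the first channel of $\Delta$ is always such a forest of full subtrees. The second channel is $\Pi_{d,\rho}$ of the remainder, and after $\Phi$ it is nonzero exactly when no novel leaf survives. So it is the restriction of $T$ to the uncut leaves. The binary-admissibility condition is what guarantees that every novel leaf disappears under $\Pi_{d,\rho}$, as in the worked example. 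Concretely: if both children of a vertex were cut, a novel leaf would remain; otherwise each cut edge's parent becomes a degree-two vertex and is contracted. I would establish this by the same induction on $T_\ell, T_r$ as in the Proposition. The result is
\[ \Delta(T) = \sum_{A \subseteq L(T)} P_A(T) \otimes T|_{L(T)\setminus A}, \]
where $A = L(T)$ gives $T\otimes 1$ and $A=\varnothing$ gives $1\otimes T$.

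\textbf{Step 2: expand both sides.} For $(\one\otimes\Delta)\Delta(T)$, apply Step 1 to $T|_{L\setminus A}$, whose leaves are $L \setminus A$. The terms are indexed by pairs $A, B$ with $B \subseteq L\setminus A$:
\[ P_A(T) \otimes P_B(T|_{L\setminus A}) \otimes T|_{L\setminus(A\cup B)}. \]
For $(\Delta\otimes\one)\Delta(T)$, use multiplicativity on the forest $P_{A'}(T)$. Its leaf set is $A'$, and a choice of subsets in each component is just a subset $A \subseteq A'$. This gives terms
\[ P_A(P_{A'}(T)) \otimes T'|_{A'\setminus A} \otimes T|_{L\setminus A'}, \]
where $T'|_{A'\setminus A}$ is the product of the restrictions of the components of $P_{A'}(T)$. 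Both sums are indexed by ordered set partitions $L = A \sqcup B \sqcup C$ (with $B = A'\setminus A$ on the right), so it suffices to match the terms factor by factor.

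\textbf{Step 3: factorwise identities.} Three facts are needed.
\begin{enumerate}
\item $P_A(P_{A'}(T)) = P_A(T)$. A maximal subtree with leaves in $A$ lies inside a maximal subtree with leaves in $A' \supseteq A$.
\item $(T|_{X})|_{Y} = T|_Y$ for $Y\subseteq X$. Restriction is transitive.
\item $P_B(T|_{L\setminus A}) = \prod_{S} S|_{B\cap L(S)}$, the product over the components $S$ of $P_{A\cup B}(T)$.
\end{enumerate}
Item (3) is the main obstacle and must be checked with care, including that no term carries a spurious multiplicity (the failure mode of $\Delta^d$). Here is the argument I would give. A vertex of $T|_{L\setminus A}$ corresponds to a vertex $v$ of $T$ that is a branch point of $L\setminus A$. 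The subtree of $T|_{L\setminus A}$ below it has leaf set $L(T_v)\setminus A$. So the maximal subtrees of $T|_{L\setminus A}$ with leaves in $B$ correspond to the maximal vertices $v$ with $L(T_v)\subseteq A\cup B$, restricted to $B$. Those vertices are exactly the roots of the components of $P_{A\cup B}(T)$.

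One more check is needed. A component $S$ with $L(S)\cap B=\varnothing$ contributes $1$ on the right. On the left, such a component contributes nothing: $P_B(T|_{L\setminus A})$ has exactly one factor $S|_{B\cap L(S)}$ for each component with $B\cap L(S)\neq\varnothing$, and none for the others. Both sides therefore agree.

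Since the terms are in bijection via the leaf-subset indexing, each side has every coefficient equal to one. Equality of terms then gives coassociativity.
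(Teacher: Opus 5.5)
Your proof is correct, but it follows a different route from the paper's main argument.

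The paper works locally with cuts. It observes that two cuts on non-adjacent edges of a root-to-leaf path can be performed in either order. In the adjacent case it checks, by a picture, that cutting the lower edge and then the merged edge gives the same term as cutting the upper edge and then the sibling. It controls multiplicities by writing $\Delta=(\one\otimes\tilde\Phi)\circ\Delta_{CK}$, invoking coassociativity of $\Delta_{CK}$, and arguing that $(\Delta\otimes\one)\circ\Delta$ has at least as many terms as $(\one\otimes\Delta)\circ\Delta$. Forests are then covered by ``the same principle''.

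You instead establish the closed formula $\Delta(T)=\sum_{A\subseteq L(T)}P_A(T)\otimes T|_{L(T)\setminus A}$. Both iterates then become sums over the same set of ordered triples $(A,B,C)$ partitioning $L(T)$, and coassociativity reduces to three identities about maximal subtrees and restriction. This is exactly the restriction-species proof that the paper mentions only in the remark credited to Kock.

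Your route buys rigor at the delicate point. Since the index sets coincide and the terms agree index by index, multiplicities match automatically; that is precisely what fails for $\Delta^d$, and you need no term-counting comparison. It also gives directly the description of iterated coproducts by ordered set partitions of $L(T)$ (blocks allowed to be empty), which the paper states without proof. The paper's route, in turn, ties $\Delta$ to Connes--Kreimer and exhibits the edge-merging mechanism that reappears as edge insertion in the duality section.

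Two details need tightening.
\begin{enumerate}
\item Define $T|_X$ as rooted at the most recent common ancestor of $X$, or say that you contract every vertex with exactly one child. A root left with a single child has degree one, so it is not removed by contracting degree-two vertices.
\item In (3), the vertex of $T|_{L\setminus A}$ heading $S|_{B\cap L(S)}$ is the common ancestor of $B\cap L(S)$. This may lie strictly below the root of $S$, so it is not literally a root of a component of $P_{A\cup B}(T)$. Argue maximality instead by noting that the next branch point of $L\setminus A$ above it lies strictly above the root of $S$. There the leaf set is no longer contained in $A\cup B$.
\end{enumerate}
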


\begin{proof}
  Consider two consecutive cuts along the same path from the root to a leaf. If the two cuts occur at what were non-adjacent edges on the original tree, then no matter which edge is cut first, the other edge remains unmodified and remains available for the second cut. Since the order of cutting is irrelevant, the same term appears in both $(\Delta \otimes \one) \circ \Delta$ and $(\one \otimes \Delta) \circ \Delta$.

  Now consider a term in $(\Delta \otimes \one) \circ \Delta$ consisting of two cuts, where the second cut occurs immediately below the root of the subtree obtained from the first cut. We claim that the same term appears in $(\one \otimes \Delta) \circ \Delta$. In the illustration, the two edges to be cut are labelled $1$ and $2$ in the original tree.
  
  Suppose the bottom edge is cut first, as in the left-hand picture: its sibling and its parent become the same edge in the quotient tree. Cutting this combined edge leaves the same quotient as if the top edge had been cut in the first place.

\tikzset{Triangle/.style = {draw, regular polygon, regular polygon sides = 3, inner sep = 1pt, anchor=north}}

\begin{figure}
\begin{equation*}
\begin{tikzpicture}
  [level distance=10mm,
   sibling distance=22mm,
   edge from parent/.style = {
    draw, edge from parent path = {(\tikzparentnode) -- (\tikzchildnode.north)}},
   ]
  \coordinate
   child {
     child {
       child {
         child {
           node[Triangle] {$A$}
           edge from parent node[auto=right,pos=.6] {$1$}
         }
         child {
           node[Triangle] {$B$}
           edge from parent node[auto=left,pos=.6] {$4$}
         }
         edge from parent node[auto=right,pos=.6] {$2$}
       }
       child {
         node[Triangle] {$C$}
         edge from parent node[auto=left,pos=.6] {$5$}
       }
       edge from parent node[auto=right,pos=.6] {$3$}
     }
     child {
       node {$...$}
     }
   edge from parent node[fill=white] {\reflectbox{$...$}}
   }
   child {node {$...$}};
\end{tikzpicture}
\end{equation*}

\begin{minipage}[b]{.47\textwidth}
  \centering
  
\begin{tabular}{@{}r@{}}
cut at $1$ \\
combine $2, 4$
\end{tabular}
\Bigg\downarrow
$\Delta$
\vspace{6mm}
  
\begin{math}
\vcenter{\hbox{
\begin{tikzpicture}
  \node[Triangle] {$A$};
\end{tikzpicture}
}}
\otimes
\vcenter{\hbox{
\begin{tikzpicture}
  [level distance=10mm,
   sibling distance=22mm,
   edge from parent/.style = {
    draw, edge from parent path = {(\tikzparentnode) -- (\tikzchildnode.north)}},
   ]
  \coordinate
   child {
     child {
       child {
           node[Triangle] {$B$}
           edge from parent node[auto=right,pos=.6] {$2\sim4$}
       }
       child {
         node[Triangle] {$C$}
         edge from parent node[auto=left,pos=.6] {$5$}
       }
       edge from parent node[auto=right,pos=.6] {$3$}
     }
     child {
       node {$...$}
     }
   edge from parent node[fill=white] {\reflectbox{$...$}}
   }
   child {node {$...$}};
\end{tikzpicture}
}}
\end{math}

\vspace{6mm}
cut at $2\sim4$
\Bigg\downarrow
$\one \otimes \Delta$
\vspace{6mm}

\resizebox{\textwidth}{!}{
\begin{math}
\vcenter{\hbox{
\begin{tikzpicture}
  \node[Triangle] {$A$};
\end{tikzpicture}
}}
\otimes
\vcenter{\hbox{
\begin{tikzpicture}
  \node[Triangle] {$A$};
\end{tikzpicture}
}}
\otimes
\vcenter{\hbox{
\begin{tikzpicture}
  [level distance=10mm,
   sibling distance=22mm,
   edge from parent/.style = {
    draw, edge from parent path = {(\tikzparentnode) -- (\tikzchildnode.north)}},
   ]
  \coordinate
   child {
     child {
       node[Triangle] {$C$}
       edge from parent node[auto=right,pos=.6] {$3\sim5$}
     }
     child {
       node {$...$}
     }
   edge from parent node[fill=white] {\reflectbox{$...$}}
   }
   child {node {$...$}};
\end{tikzpicture}
}}
\end{math}
}
\end{minipage}
\begin{minipage}[b]{.02\textwidth}
  $=$
\end{minipage}
\begin{minipage}[b]{.47\textwidth}
    \centering
    
$\Delta$
\Bigg\downarrow
cut at $2$
\vspace{10mm}
      
\resizebox{\textwidth}{!}{
\begin{math}
\vcenter{\hbox{
\begin{tikzpicture}
  [level distance=10mm,
   sibling distance=22mm,
   edge from parent/.style = {
    draw, edge from parent path = {(\tikzparentnode) -- (\tikzchildnode.north)}},
   ]
  \coordinate
         child {
           node[Triangle] {$A$}
           edge from parent node[auto=right,pos=.6] {$1$}
       }
       child {
         node[Triangle] {$B$}
         edge from parent node[auto=left,pos=.6] {$4$}
       };
\end{tikzpicture}
}}
\otimes
\vcenter{\hbox{
\begin{tikzpicture}
  [level distance=10mm,
   sibling distance=22mm,
   edge from parent/.style = {
    draw, edge from parent path = {(\tikzparentnode) -- (\tikzchildnode.north)}},
   ]
  \coordinate
   child {
     child {
       node[Triangle] {$C$}
       edge from parent node[auto=right,pos=.6] {$3\sim5$}
     }
     child {
       node {$...$}
     }
   edge from parent node[fill=white] {\reflectbox{$...$}}
   }
   child {node {$...$}};
\end{tikzpicture}
}}
\end{math}
}

\vspace{10mm}
$\Delta \otimes \one$
\Bigg\downarrow
\begin{tabular}{@{}l@{}}
cut at $4$ \\
contract $1$
\end{tabular}
\vspace{10mm}

\resizebox{\textwidth}{!}{
\begin{math}
\vcenter{\hbox{
\begin{tikzpicture}
  \node[Triangle] {$A$};
\end{tikzpicture}
}}
\otimes
\vcenter{\hbox{
\begin{tikzpicture}
  \node[Triangle] {$A$};
\end{tikzpicture}
}}
\otimes
\vcenter{\hbox{
\begin{tikzpicture}
  [level distance=10mm,
   sibling distance=22mm,
   edge from parent/.style = {
    draw, edge from parent path = {(\tikzparentnode) -- (\tikzchildnode.north)}},
   ]
  \coordinate
   child {
     child {
       node[Triangle] {$C$}
       edge from parent node[auto=right,pos=.6] {$3\sim5$}
     }
     child {
       node {$...$}
     }
   edge from parent node[fill=white] {\reflectbox{$...$}}
   }
   child {node {$...$}};
\end{tikzpicture}
}}
\end{math}
}     
 \end{minipage}
 \end{figure}

  Our next claim is that $(\Delta \otimes \one) \circ \Delta$ has at least as many terms as $(\one \otimes \Delta) \circ \Delta$. Since we have matched each term in $(\Delta \otimes \one) \circ \Delta$ to a unique term in $(\one \otimes \Delta) \circ \Delta$, this is sufficient to show that they are equal.

  Recall that $\Delta$ can be written as a composition in terms of the Connes-Kreimer coproduct, Let $\tilde\Phi := \Phi \circ \Pi_{d,\rho}$. Then
  \begin{align*}
    \Delta &= (\one \otimes \Phi) \circ (\one \otimes \Pi_{d,\rho}) \circ \Delta_{CK} \\
    &= (\one \otimes \Phi \circ \Pi_{d,\rho} ) \circ \Delta_{CK} \\
    &= (\one \otimes \tilde\Phi) \circ \Delta_{CK}
  \end{align*}
  We also recall that the Connes-Kreimer coproduct is coassociative. Then
  \begin{align*}
    (\Delta \otimes \one) \circ \Delta
    &= (\Delta \otimes \one) \circ (\one \otimes \tilde\Phi) \circ \Delta_{CK} \\
    &= ((\one \otimes \tilde\Phi) \circ \Delta _{CK} \otimes \one) \circ (\one \otimes \tilde\Phi) \circ \Delta_{CK} \\
    &= ((\one \otimes \tilde\Phi) \circ \Delta _{CK} \otimes \tilde\Phi) \circ \Delta_{CK} \\
    &= (\one \otimes \tilde\Phi \otimes \tilde\Phi) \circ (\Delta_{CK} \otimes \one)\circ \Delta_{CK} \\
    &= (\one \otimes \tilde\Phi \otimes \tilde\Phi) \circ (\one \otimes \Delta_{CK})\circ \Delta_{CK},
  \end{align*}
  where the last equality invokes the coassociativity of Connes-Kreimer. Meanwhile,
  \begin{align*}
    (\one \otimes \Delta)\Delta
    &= (\one \otimes ((\one \otimes \tilde\Phi) \circ \Delta_{CK})) \circ \Delta \\
    &= (\one \otimes ((\one \otimes \tilde\Phi) \circ \Delta_{CK})) \circ (\one \otimes \tilde\Phi) \circ \Delta_{CK} \\
    &= (\one \otimes \one \otimes \tilde\Phi ) \circ (\one \otimes \Delta_{CK}) \circ (\one \otimes \tilde\Phi) \circ \Delta_{CK} \\
    &= (\one \otimes \tilde\Phi \otimes \tilde\Phi ) \circ (\one \otimes \Delta_{CK}) \circ (\one \otimes \tilde\Phi) \circ \Delta_{CK}.
  \end{align*}
  The last equality is justified because any term which appears in $(\one \otimes \Delta) \circ \Delta$ has no component containing any trees with novel leaves, nor any trees which are not binary, so applying $\tilde\Phi$ to the middle component does not change any term.
  
  Since $\tilde\Phi$ serves to contract certain edges and to delete certain terms, $(\one \otimes \tilde\Phi) \circ \Delta_{CK}$ cannot have more terms than $\Delta_{CK}$. A further application of $\Delta_{CK}$ in the second channel also cannot introduce more terms, nor can $\one \otimes \tilde\Phi \otimes \tilde\Phi$ preserve more terms than above.
  
  Hence $(\Delta \otimes \one) \circ \Delta$ has at least as many terms as $(\one \otimes \Delta) \circ \Delta$, and they are equal.
  
  The same principle applies to pairs of simultaneous cuts that result in a remainder consisting of forests instead of a singular tree, so $\Delta$ is coassociative.
\end{proof}

\begin{remark}
  Joachim Kock points out that binary forests are a \emph{restriction species} in the sense of Schmitt \cite{schmitt1993hopf}, where the restriction of a subset corresponds to extracting certain subtrees and contracting the remaining edges. This gives an alternative proof of coassociativity.
\end{remark}

\subsection{The iterated coproduct}

The $n$th iterated coproduct is the map $\Delta^{(n)} : \cH \to \cH^{\otimes (n+1)}$ defined recursively by $\Delta^{(1)} := \Delta$ and
\[ \Delta^{(k)} := (\Delta^{(k-1)} \otimes \one) \circ \Delta \]
for $k \geq 2$. We can just as easily define $\Delta^{(k)}$ by $(\one \otimes \Delta^{(k-1)}) \circ \Delta$; coassociativity ensures that this produces the same sequence of maps.

Given a finite set $S$, a \emph{partition} of $S$ is an unordered collection of pairwise disjoint nonempty sets $S_1, \dots, S_m$ whose union is $S$. A \emph{composition} of $S$ is an ordered partition of $S$. The iterated coproduct of $T$ is indexed by set compositions of $L(T)$.

\subsection{The antipode} Since $\cH$ is a graded connected Hopf algebra, the antipode can be defined recursively by $S(1) = 1$ and
\[ S(T) = -T - \sum_{(T)} S(T') T'' \] 
where the sum is indexed over all terms in the reduced coproduct of $T$, i.e. all terms other than the empty and full cuts.

\begin{example}
Here are the antipodes of some small trees with up to three leaves:
\begin{align*}
  S(\alpha) &= -\alpha \\
  S( \Tree [  $\alpha$ $\beta$ ] )
  &= - \Tree [ $\alpha$ $\beta$ ] + 2 \alpha \sqcup \beta \\
  S( \Tree [ [ $\alpha$ $\beta$ ] $\gamma$ ] )
  &= - \Tree [ [ $\alpha$ $\beta$ ] $\gamma$ ]
    + \alpha \sqcup \Tree [ $\beta$ $\gamma$ ]
    + \beta \sqcup \Tree [ $\alpha$ $\gamma$ ]
    + 2\gamma \sqcup \Tree [ $\alpha$ $\beta$ ]
    - 4 \alpha \sqcup \beta \sqcup \gamma.
\end{align*}
\end{example}

The terms in $S(T)$ correspond bijectively to the total cuts of $T$ that do not sever both children of any node. We will call such cuts \emph{binary-total cuts}.

Each tree in the forest resulting from a binary-total cut is then projected by $\Pi_{d,\rho}$ to the unique full binary tree obtained by edge contraction. (Compare this to the behaviour of the Connes-Kreimer antipode, which is indexed by all total cuts of a given tree.)

The empty cut is considered to be the unique binary-total cut where no edges are severed, and it corresponds to the unique term of $T$ in $S(T)$, which always carries a coefficient of $-1$.

We will formalize this observation, as well as some others, in the following proof:

\begin{theorem}
  If $T$ is a tree with $n$ leaves, then:
  \begin{enumerate}
    \item Each term of $S(T)$ corresponds to a unique binary-total cut of $T$, and vice versa.
    \item The sign of the coefficient in a term in $S(T)$ is negative if the term is a forest consisting of an odd number of trees, and positive if the forest has an even number of trees.
    \item The sum of the absolute values of the coefficients in $S(T)$ is $3^{n-1}$.
  \end{enumerate}
\end{theorem}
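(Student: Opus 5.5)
The plan is to guess the closed form
\[ S(T) = \sum_{C} (-1)^{|C|+1}\, \Pi_{d,\rho}\bigl(F_C\bigr), \]
where $C$ runs over binary-total cuts of $T$ and $F_C$ is the forest obtained by deleting the edges of $C$. Here "binary-total" means: a set of edges, with no two sharing a parent vertex, but with no restriction on two edges lying on the same root-to-leaf path. Each tree of $F_C$ is then projected to a full binary tree, and $F_C$ has exactly $|C|+1$ components. I would prove this formula by strong induction on the number of leaves, using the recursion $S(T) = -T - \sum_{(T)} S(T')T''$ over the reduced coproduct. This is consistent with the examples: for the cherry, the cuts at $\alpha$ and at $\beta$ both give $\alpha\sqcup\beta$, hence the coefficient $2$.

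\textbf{Decomposing a cut.} Let $C\neq\varnothing$ be a binary-total cut. Let $D\subseteq C$ be the set of edges of $C$ that have no other edge of $C$ above them on the path to the root. Then $D$ is binary-admissible, since it is an antichain and inherits condition (2) from $C$. The remaining edges $C\setminus D$ lie inside the pruned subtrees below $D$, and restricted to each such subtree they form a binary-total cut of it.

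Conversely, a nonempty binary-admissible cut $D$ together with a binary-total cut of each tree of $P^D(T)$ reassembles into a unique binary-total cut $C$ of $T$. Under this correspondence the contraction $\Pi_{d,\rho}$ commutes with the reassembly: contracting the pruned pieces first and then the root component yields the same forest as contracting $F_C$ directly.

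\textbf{The pruning map $\Phi$.} I must check that $\Phi$ kills nothing relevant. Since a binary-admissible $D$ never severs both children of a vertex, descending from the root through uncut children always reaches an original leaf. So the root component $R^D(T)$, after $\Pi_{d,\rho}$, carries no novel leaves, except in the total cut, which is excluded from the reduced coproduct. The terms of $\Delta(T)$ other than $1\otimes T$ and $T\otimes 1$ are therefore exactly the nonempty proper binary-admissible $D$, matching the bijection with leaf subsets in the Proposition.

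\textbf{Signs and part (1).} By the induction hypothesis applied to the components of $T'=P^D(T)$, with $S$ multiplicative because $\cH$ is commutative, $S(T')$ is the signed sum over binary-total cuts of $T'$. The sign is $(-1)^{\#\text{components}}$. Multiplying by the single tree $T''$ and by the leading minus sign, the number of components goes up by one and the sign flips, so the sign is again $(-1)^{\#\text{components}}$. The term $-T$ accounts for $C=\varnothing$. Summing over $D$ thus reproduces the claimed formula, which gives (1) and (2). For (2) it also matters that all contributions to a given forest carry the same sign, since that sign depends only on the forest's number of components. Hence no cancellation occurs, and the coefficient of a forest is $\pm$ the number of binary-total cuts producing it.

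\textbf{Part (3).} Since there is no cancellation, (3) reduces to counting binary-total cuts. At the root of $T=(T_\ell,T_r)$ there are three options: cut the left child edge, cut the right child edge, or cut neither. In each case the rest of the cut is an independent choice of binary-total cuts inside $T_\ell$ and $T_r$. This gives $N(T)=3N(T_\ell)N(T_r)$ with $N(\bullet)=1$, hence $N(T)=3\cdot 3^{n_\ell-1}3^{n_r-1}=3^{n-1}$. This agrees with $1+2=3$ for the cherry and $1+1+1+2+4=9$ in the three-leaf example.

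\textbf{Main obstacle.} I expect the hardest step to be the careful verification that $\Pi_{d,\rho}$ and $\Phi$ interact correctly with the top-edge decomposition. Concretely, contracted edges in $T''$ must not create coincidences that break the bijection, and "terms" must be read as counted with multiplicity (as in the coefficient $2$ for the cherry). I would handle this by working with leaf-labelled trees, so that distinct cuts remain distinguishable before projection.
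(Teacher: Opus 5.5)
Your proposal is correct and follows essentially the same route as the paper. You induct via the recursive antipode formula, split a nonempty binary-total cut into its topmost edges (a binary-admissible cut) plus binary-total cuts of the pruned subtrees, track the sign flip as each recursive step adds one component, and count three choices per internal node to get $3^{n-1}$. Beyond that, your explicit signed closed form $\sum_C (-1)^{|C|+1}\Pi_{d,\rho}(F_C)$, your check that $\Phi$ kills nothing in the reduced coproduct, and your observation that the sign depends only on the number of components make precise points the paper leaves implicit; the last one rules out cancellation, which the count in part (3) silently requires.
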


\begin{proof}
  These properties are easily seen for $n=1$, when the tree is a single leaf and the only binary-total cut is the empty cut. For $n=2$, consider $S(\Tree [  $\alpha$ $\beta$ ])$: there is the empty cut and two copies of $\alpha \sqcup \beta$, one resulting from cutting the edge above $\alpha$, and the other from cutting the edge above $\beta$. We proceed with a general proof for larger trees.

  \begin{enumerate}
    \item Let $T$ be a tree with $n \geq 2$ leaves. We will establish a bijection between non-empty total cuts of $T$ and pairs $(T'', C_{T'})$, where $T'' \otimes T'$ is a term of $\tilde \Delta(T)$ and $C_{T'}$ is a total cut of the forest $T'$. In simpler terms, we show that each binary-total cut of $T$ has a unique representation as a binary-admissible cut of $T$ along with a binary-total cut of the severed forest.
    
    Since $T''$ is a forest of trees with at most $n-1$ leaves each and $S$ is multiplicative, each term of $S(T')$ corresponds to a binary-total cut of the forest $T''$. By the recursive definition of $S(T)$, each term apart from $-T$ corresponds to a pair $(T'',C_{T'})$.

    Let $C$ be a total cut of $T$. Let $C' \subseteq C$ be the set of all edges which are the first edge on any path from the root. Then $C'$ describes an admissible cut of $T$. Let $T' \otimes T''$ be the term in $\tilde\Delta(T)$ corresponding to the cut $C$; then $C_{T'} := C \setminus C'$ is a binary-total cut of $T'$, giving a pair $(T'', C_{T'})$.

    Conversely, given a pair $(T'', C_{T'})$, let $C'$ be the admissible cut which produces $T' \otimes T''$. Since all edges in $C'$ appear above $T''$, $C' \cup C_{T'}$ is a binary-total cut of $T$. Moreover, since each total cut can be built up recursively as a sequence of admissible cuts over the severed forests, the terms in $S(T)$ are exactly the results of performing a simultaneous binary-total cut on $T$, and then projecting each tree to its image under $\Pi_{d,\rho}$.
    
    \item If a term in $S(T')$ is a forest of $k$ trees, then it produces a term in $-S(T') T''$ with $k+1$ trees and the opposite coefficient.
    
    Since $T$ appears in $S(T)$ with a negative coefficient, all odd forests have a negative coefficient, while even forests carry a positive coefficient.

    \item For the sum of the unsigned coefficients: each binary-total cut is uniquely determined by a choice of three options at each internal node, and conversely each set of decisions determines a total cut. At each internal node, of which there are $n-1$ for a tree with $n$ leaves, we can choose either to keep both of its children intact, or to cut exactly one of its children. Hence there are $3^{n-1}$ possible binary-total cuts.
  \end{enumerate}
\end{proof}

\begin{example}
We enumerate all of the binary-total cuts on
 $T = \CoIII{$\alpha$}{$\beta$}{$\gamma$}$
and match them with the corresponding terms in $S(T)$.

    \begin{center}
    \begin{tabular}{ |c||c| }
       \hline
       binary-total cut & term in $S(T)$ \\ \hline\hline
       \Tree [ [ $\alpha$ $\beta$ ] $\gamma$ ]
         & \Tree [ [ $\alpha$ $\beta$ ] $\gamma$ ] $\quad$ (empty cut) \\ \hline
       \Tree [ [ $\alpha$ $\beta$ ] $\gamma$ ]
       \makebox[0pt][r]{\raisebox{0.4em}{\color{red}\tiny$\diagdown$}\hspace*{1.6em}}
         & $\gamma \sqcup \CoII{$\alpha$}{$\beta$}$ \\ \hline
       \Tree [ [ $\alpha$ $\beta$ ] $\gamma$ ]
       \makebox[0pt][r]{\raisebox{0.4em}{\color{red}\tiny$\diagup$}\hspace*{0.6em}}
         & $\gamma \sqcup \CoII{$\alpha$}{$\beta$}$ \\ \hline
       \Tree [ [ $\alpha$ $\beta$ ] $\gamma$ ]
       \makebox[0pt][r]{\raisebox{0em}{\color{red}\tiny$\diagdown$}\hspace*{2.6em}}
         & $\alpha \sqcup \CoII{$\beta$}{$\gamma$}$ \\ \hline
       \Tree [ [ $\alpha$ $\beta$ ] $\gamma$ ]
       \makebox[0pt][r]{\raisebox{0em}{\color{red}\tiny$\diagup$}\hspace*{1.6em}}
         & $\beta \sqcup \CoII{$\alpha$}{$\gamma$}$ \\ \hline
       \Tree [ [ $\alpha$ $\beta$ ] $\gamma$ ]
       \makebox[0pt][r]{\raisebox{0.4em}{\color{red}\tiny$\diagdown$}\hspace*{1.6em}}\makebox[0pt][r]{\raisebox{0em}{\color{red}\tiny$\diagdown$}\hspace*{2.6em}}
         & $\alpha\sqcup\beta\sqcup\gamma$ \\ \hline
       \Tree [ [ $\alpha$ $\beta$ ] $\gamma$ ]
       \makebox[0pt][r]{\raisebox{0.4em}{\color{red}\tiny$\diagdown$}\hspace*{1.6em}}\makebox[0pt][r]{\raisebox{-0.2em}{\color{red}\tiny$\diagup$}\hspace*{1.8em}}
         & $\alpha\sqcup\beta\sqcup\gamma$ \\ \hline
       \Tree [ [ $\alpha$ $\beta$ ] $\gamma$ ]
       \makebox[0pt][r]{\raisebox{0.4em}{\color{red}\tiny$\diagup$}\hspace*{0.6em}}\makebox[0pt][r]{\raisebox{0em}{\color{red}\tiny$\diagdown$}\hspace*{2.6em}}
         & $\alpha\sqcup\beta\sqcup\gamma$ \\ \hline
        \Tree [ [ $\alpha$ $\beta$ ] $\gamma$ ]
       \makebox[0pt][r]{\raisebox{0.4em}{\color{red}\tiny$\diagup$}\hspace*{0.6em}}\makebox[0pt][r]{\raisebox{0em}{\color{red}\tiny$\diagup$}\hspace*{1.6em}}
         & $\alpha\sqcup\beta\sqcup\gamma$ \\ \hline
    \end{tabular}
    \end{center}
\end{example}

\subsection{Subalgebra of comb trees}

The comb trees are the binary trees which can be represented as having all of its leaves, except one of the two unique leaves at the lowest depth, leaning towards the right.

We denote the unique abstract unlabelled comb tree with $n$ leaves by $C_n$. The first four unlabelled comb trees are
\[
   C_1 = \bullet, \quad
   C_2 = \bintree{[[][]]}, \quad
   C_3 = \bintree{[[[][]][]]}, \quad
   C_4 = \bintree{[[[[][]][]][]]}.
\]

We also make the identification $C_0 = 1$. The comb trees are a Hopf subalgebra of $\cH$, since any cut of $C_n$ produces a term of the form $C_m C_1^i \otimes C_{n-m-i}$ in the coproduct (where it is possible that $m=0$).

An explicit formula for the coproduct of $C_n$ is
\begin{align*}
  \Delta(C_n) = \sum_{i=0}^{n-2} {n - 2 \choose i} C_1^i \otimes C_{n-i} &+ 2 \sum_{j=0}^{n-2} {n-2 \choose j} C_1^{j+1} \otimes C_{n-j-1} \\
  &+ \sum_{k=2}^{n-1} \left( \sum_{\ell=0}^{n-k-1} {n-k-1 \choose \ell} C_k C_1^\ell \otimes C_{n-k-\ell} \right) + C_n \otimes 1
\end{align*}

Pascal's recursion formula allows us to partially combine the first two sums, resulting in the equivalent expression
\begin{align*}
  \Delta(C_n) = \sum_{i=1}^{n-2} \left[ {n - 2 \choose i-1} + {n-1 \choose i} \right]  C_1^i \otimes C_{n-i} &+ \sum_{k=2}^{n-1} \left( \sum_{\ell=0}^{n-k-1} {n-k-1 \choose \ell} C_k C_1^\ell \otimes C_{n-k-\ell} \right) \\
  &+ 2 C_1^{n-1} \otimes C_1 +  1 \otimes C_n +  C_n \otimes 1.
\end{align*}

\subsection{A first polynomial invariant}

The polynomial algebra $k[X]$ is equipped with a Hopf algebra structure given the coproduct $\Delta(X) = 1 \otimes X + X \otimes 1$. Since the terms in the coproduct are indexed by subsets of the leaves, the map sending any forest with $n$ leaves to $X^n$ defines a surjective morphism of Hopf algebras $\cH \to k[X]$.

We also observe that in the unlabelled version of $\cH$, the Hopf subalgebra generated by single-vertex trees is an injective copy of $k[X]$ inside $\cH$.

Suppose that $\cH$ has $m$ possible leaf labels $\ell_1, \dots, \ell_m$. Then there is a surjective morphism $\cH \to k[X_1, \dots, X_m]$. Similarly there is an injective copy of $k[X_1, \dots, X_m]$ in $\cH$, where $X_i$ maps to the single vertex tree with label $\ell_i$.

\subsection{Connection with Bruned's Connes-Kreimer-like Hopf algebra}

Bruned defines in his PhD thesis a coproduct $\delta^+$ on rooted nonplanar trees whose terms are also indexed by subsets of leaves \cite{bruned2015singular}. This coproduct is applied to the renormalization of stochastic processes.

We base our exposition on the notation in \cite{foissy2021algebraic}, which provides another description of Bruned's construction. Note that both \cite{bruned2015singular} and \cite{foissy2021algebraic} use the convention opposite from ours where the remainder term containing the root in the coproduct occurs in the left channel.

For a tree in $\cH_{CK}$, define $\delta^+$ as
\[ \delta^+(T) = T \otimes 1 + \sum_{C \in \mathrm{Adm}'(T)} B^+(P^C(T)) \otimes R^C(T), \]
where $\mathrm{Adm}'(T)$ denotes the subset of admissible cuts, in the sense of Connes-Kreimer, of $T$ where the non-root leaves of $R^c(T)$ are leaves of $T$. These are exactly the admissible cuts that do not sever all of the children of a given node, since severing all of the children of a node turns it into a new leaf. The term $P^C(T) \otimes R^C(T)$ is the term corresponding to the admissible cut $C$ in $\Delta_{CK}$; the operator $B^+$ is the $1$-cocycle which joins all trees in a forest to a common root.

\begin{example}
 Let $T = \tinytree{[[[,label={[font=\Huge]south:$1$}][,label={[font=\Huge]south:$2$}]][[,label={[font=\Huge]south:$3$}]]]}$. Then
 \begin{align*}
    \delta^+(T) = T \otimes 1 + 1 \otimes T
    &+ \bullet_1 \otimes \tinytree{[[[,label={[font=\Huge]south:$2$}]][[,label={[font=\Huge]south:$3$}]]]}
    + \bullet_2 \otimes \tinytree{[[[,label={[font=\Huge]south:$1$}]][[,label={[font=\Huge]south:$3$}]]]}
    + \tinytree{[[,label={[font=\Huge]south:$3$}]]} \otimes 
    \tinytree{[[[,label={[font=\Huge]south:$1$}][,label={[font=\Huge]south:$2$}]]]} \\
    &+ \tinytree{[[,label={[font=\Huge]south:$1$}][,label={[font=\Huge]south:$2$}]]} \otimes \tinytree{[[[,label={[font=\Huge]south:$3$}]]]} 
    + \tinytree{[[,edge=dashed,label={[font=\Huge]south:$1$}][,edge=dashed[,label={[font=\Huge]south:$3$}]]]} \otimes \tinytree{[[[,label={[font=\Huge]south:$2$}]]]} 
    + \tinytree{[[,edge=dashed,label={[font=\Huge]south:$2$}][,edge=dashed[,label={[font=\Huge]south:$3$}]]]} \otimes \tinytree{[[[,label={[font=\Huge]south:$1$}]]]} .
  \end{align*}
  The dashed edges are the ones created by $B^+$.
\end{example}

Then our coproduct $\Delta$ is the composition $(B^- \otimes \Pi_{d,\rho}) \circ \delta^+$, where $B^-$ is the operator that deletes all roots from a forest and $\delta^+$ is restricted to binary trees in $\cH_{CK}$. 

Consider a quotient of $\cH_{CK}$ where all directed paths in a graph, where no originating vertex has more than one tree, are contracted into a single edge. For instance, all ladder graphs are identified with a single vertex. Specifically, this quotient $\cH_{CK}/ \sim$ is generated as an algebra by \emph{reduced trees}, which are trees with no vertices of degree $2$ other than possibly the root.

\begin{example}
  The following trees in $\cH_{CK}$ are all in the same class under the quotient $\sim$:
  \[ \tinytree{[[][]]}, \tinytree{[[[]][]]}, \tinytree{[[[]][[]]]}, \tinytree{[[[][]]]}, \tinytree{[[[[][]]]]}, \tinytree{[[[[]][]]]}, \dots \]
\end{example}

Any edge of a tree in $\cH_{CK}$ which has a single child is not eligible to be cut under $\delta^+$, so this quotient respects $\delta^+$. 

\begin{remark}
  While it is tempting to extend our notion of binary-admissible cuts to trees of arbitrary arity by disallowing cuts that sever all of the children of any given node, this has unwelcome consequences for duality, since the edge insertion procedure of Section \ref{newlie} can only create new vertices with binary branching. Thus we limit our consideration to binary trees.
\end{remark}

\section{The edge-insertion Lie algebra} \label{newlie}

In the previous section, we constructed a new Hopf algebra $\cH$ by explicitly defining its coproduct and verifying its coassociativity. Later we will present an alternative construction of $\cH$ as the dual of the universal enveloping algebra of a Lie algebra. This section deals with the construction of the Lie algebra on binary trees; duality is proved in Section \ref{duality}.

We will refer to this pre-Lie algebra on $k\{\cT\}$ as $(\cL, \ins)$.

\subsection{The edge-insertion operation}

Let $T, S \in \cT$ and let $e$ be an edge of $T$. To insert $S$ into $T$ at $e$, an operation which we denote by $T \ins_e S$, split the edge $e$ to create a new vertex and join the root of $S$ to this vertex by a new edge. The following is an example of this:

\begin{align*}
\begin{tikzpicture}
  [baseline=-5mm,
   level distance=3.5mm,
   sibling distance=13mm,
   every node/.style={text height=0.5mm}]
  \coordinate
       child {
         child {node {$\alpha$}}
         child {node {$\beta$}}
         edge from parent node[left] {$e$}
       }
       child {node {$\gamma$}}
       ;
\end{tikzpicture}
  \ins_e
  \Tree [ $\delta$ $\epsilon$ ]
  &=
  \Tree [ [ [ $\alpha$ $\beta$ ] [ $\delta$ $\epsilon$ ] ] $\gamma$ ]
\end{align*}

Define $T \ins S$ as the sum of the trees produced by inserting $S$ at each edge of $T$, \emph{plus} the tree obtained by grafting $T$ and $S$ to a common root (which coincides with $\fM(T,S)$, the result of applying Merge to $T$ and $S$). For instance,

\begin{align*}
  \Tree [ [ $\alpha$ $\beta$ ] $\gamma$ ]
  \ins
  \Tree [ $\delta$ $\epsilon$ ]
  =
  \Tree [ [ [ $\alpha$ $\beta$ ] [ $\delta$ $\epsilon$ ] ] $\gamma$ ]
  + \Tree [ [ $\alpha$ $\beta$ ] [ [ $\delta$ $\epsilon$ ] $\gamma$ ] ]
  + \Tree [ [ [ $\alpha$ [ $\delta$ $\epsilon$ ] ] $\beta$ ] $\gamma$ ]
  &+ \Tree [ [ $\alpha$ [ [ $\delta$ $\epsilon$ ] $\beta$ ] ] $\gamma$ ] \\
  &
  + 
  \Tree [ [ [ $\alpha$ $\beta$ ] $\gamma$ ] [ $\delta$ $\epsilon$ ] ]
\end{align*}
Indeed, the last term is equal to $\fM \Big(  \Tree [ [ $\alpha$ $\beta$ ] $\gamma$ ], \Tree [ $\delta$ $\epsilon$ ] \Big)$.

Every instance of insertion turns one edge into two and then creates a new one, meaning the total edge count increases by two. Again, there is a natural grading on trees by the number of leaves.

\begin{remark}
Compare the edge-insertion procedure with the coproduct defined in the previous section, where cutting a single edge in a tree collapses two of the remaining edges into one. This already hints to the duality of these two procedures.
\end{remark}

\begin{remark} \label{ghost}
We can alternatively view our rooted trees as having an additional edge at the root. Under this interpretation, insertion involves splitting an edge of $T$ and identifying the new vertex with the upper vertex of the root of $S$. Then the sum $T \ins S$ is indexed by all edges of $T$, since the term $\fM(T,S)$ is obtained by inserting $S$ at the root edge of $T$.

\begin{align*}
\begin{tikzpicture}
  [baseline=-3mm,
   level distance=3.5mm,
   level 1/.style={level distance=4.5mm},
   sibling distance=13mm,
   every node/.style={text height=0.5mm}]
  \coordinate
     child {
       child {
         child {node {$\alpha$}}
         child {node {$\beta$}}
       }
       child {node {$\gamma$}}
       edge from parent node[left] {$e$}
     };
\end{tikzpicture}
  \ins_e
  \Tree [ [ $\delta$ $\epsilon$ ] ]
  &=
  \Tree [ [ [ [ $\alpha$ $\beta$ ] $\gamma$ ] [ $\delta$ $\epsilon$ ] ] ]
\end{align*}

However, the source of the root edge will never be viewed as a leaf. For this reason this edge will be considered a \emph{ghost} or \emph{virtual edge}.
\end{remark}
We will primarily use the first model in order to follow the conventions of the existing literature, but we will refer to the second model when it is more convenient for proofs, such as the following.

\begin{theorem}
  $\ins$ is a pre-Lie operation.
\end{theorem}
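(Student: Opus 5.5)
We verify the right Vinberg identity $A(T,S,U)=A(T,U,S)$ for trees $T,S,U\in\cT$ and extend by bilinearity. Throughout we use the ghost-edge model of Remark \ref{ghost}. Every tree carries a virtual root edge, and $T\ins S=\sum_{e} T\ins_e S$, where $e$ runs over all edges of $T$ including the ghost edge. In this model insertion is uniform: split $e$ into an upper and a lower half, and hang the ghost edge of $S$ from the new vertex.

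\textbf{Expand $(T\ins S)\ins U$.} Its terms are indexed by pairs $(e,f)$ with $e\in E(T)$ and $f$ an edge of $T\ins_e S$. The edges of $T\ins_e S$ fall into three disjoint classes:
\begin{enumerate}
\item edges of $T$ other than $e$;
\item the two halves $e^{\mathrm{up}}, e^{\mathrm{down}}$ of the split edge $e$;
\item edges of $S$, including its ghost edge, which now joins the new vertex to the old root of $S$.
\end{enumerate}

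\textbf{Match $x\ins(y\ins z)$ against class (3).} The term $T\ins(S\ins U)$ is indexed by pairs $(e,f)$ with $e\in E(T)$ and $f\in E(S)$ (ghost edge included). The ghost edge of $S\ins_f U$ is the ghost edge of $S$, even when $f$ is the ghost edge of $S$, because in that case the ghost of $S$ is split and its upper half becomes the new ghost. Hence $T\ins_e(S\ins_f U)=(T\ins_e S)\ins_f U$ for every such pair. So $T\ins(S\ins U)$ cancels exactly the class (3) terms of $(T\ins S)\ins U$.

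\textbf{Identify what remains of the associator.} $A(T,S,U)$ is the sum over classes (1) and (2), and the task is to show this sum is symmetric in $S$ and $U$.
\begin{itemize}
\item Class (1), $f\ne e$ both in $E(T)$: inserting $S$ at $e$ and $U$ at $f$ involves disjoint edges, so the operations commute. Summing over ordered pairs of distinct edges therefore gives an expression symmetric under $S\leftrightarrow U$.
\item Class (2), $f\in\{e^{\mathrm{up}},e^{\mathrm{down}}\}$: both $S$ and $U$ hang from the subdivided edge $e$, as two new vertices on the path replacing $e$, with $S$ above $U$ or $U$ above $S$. Inserting $U$ at $e^{\mathrm{up}}$ places $U$ above $S$, and inserting $U$ at $e^{\mathrm{down}}$ places it below. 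Thus for each $e$ the class (2) contribution is the sum of the two trees obtained by hanging $S$ and $U$ on $e$ in either vertical order, which is symmetric in $S$ and $U$.
\end{itemize}
Combining these, $A(T,S,U)=A(T,U,S)$.

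\textbf{Main obstacle.} The delicate point is the ghost edge. We must check that inserting at the ghost edge of $T$ (the Merge term), or at the ghost edge of $S$ inside $T\ins_e S$, is correctly classified, and that no tree in class (3) is double-counted with class (2). The identification of the ghost edge of $S$ as an edge of $T\ins_e S$ distinct from $e^{\mathrm{up}}$ and $e^{\mathrm{down}}$ handles this, but it should be verified carefully in the case $e$ = ghost edge of $T$, where $e^{\mathrm{up}}$ becomes the new ghost.

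Because trees are nonplanar and the identity is proved term-by-term on labelled edge data, multiplicities match automatically. Any automorphisms of $T$, $S$ or $U$ affect both sides equally, since we sum over edges rather than isomorphism classes.
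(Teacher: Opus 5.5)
Your proof is correct and follows the same route as the paper: in the ghost-edge model, $T \ins (S \ins U)$ cancels exactly the terms of $(T \ins S) \ins U$ in which $U$ is inserted into an edge coming from $S$. What remains are insertions into two distinct edges of $T$, or into the upper and lower halves of a single split edge, and both are symmetric in $S$ and $U$. Your edge classification is a little more explicit than the paper's, which spells out the matching only for the Merge term $T \ins \fM(S,U)$. The ghost-edge case you flag is already handled by your class (2) argument: it yields $\fM(\fM(T,S),U) + \fM(\fM(T,U),S)$.
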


\begin{proof}
  Let $T_1, T_2, T_3$ be three trees. All trees are viewed as having a root edge. We will assign a geometric meaning to the associator,
  \[ A(T_1,T_2,T_3) = (T_1 \ins T_2) \ins T_3 - T_1 \ins (T_2 \ins T_3), \]
  which will show that the associator is unchanged when the positions of $T_2$ and $T_3$ are swapped.
  
  Note that $T_2 \ins T_3$ contains the term $\fM(T_2,T_3)$: the terms in $T_1 \ins (T_2 \ins T_3)$ resulting from $T_1 \ins \fM(T_2,T_3)$ correspond exactly to the terms in $(T_1 \ins T_2) \ins T_3$ where $T_2$ is inserted into an edge of $T_1$, and then $T_3$ is inserted into the edge which was originally the root of $T_2$.

  The terms in the associator $A(T_1,T_2,T_3)$ consist of the trees obtained when $T_2$ and $T_3$ either are inserted into distinct edges of $T_1$, or are both inserted into the same edge. In the latter case, each edge produces two terms: upon inserting $T_2$ into $T_1$ at an edge $e$, the edge is split, so $T_3$ is subsequently inserted into either the upper or lower edge, so (the root of) $T_3$ appears either above or below (the root of) $T_2$.

  The exact same terms appear in $A(T_1,T_2,T_3)$, since the order in which $T_2$ and $T_3$ are inserted is ultimately irrelevant. Hence $\ins$ satisfies the Vinberg identity, making it a pre-Lie operator.
\end{proof}

\begin{remark}
$\cL$ is not free as a pre-Lie algebra, since
\[ \bintree{[[][]]} \ins \bullet = 3 \bintree{[[[][]][]]} \]
and
\[ \bullet \ins \bintree{[[][]]} = \bintree{[[[][]][]]}. \]
\end{remark}

\subsection{An operadic perspective}

Since all rooted nonplanar binary trees are elements of the Merge operad $\cM$, we can write a decomposition of the $\ins$ operator as a series of coloured operadic insertions. One component of this decomposition is $ \Tree [ $\ell_1$ $\ell_2$ ] \in \cM(2)$, which serves as an operadic representation of the Merge operation. Crucially, it provides additional evidence that the term $\fM(T,S)$ should naturally appear in $T \ins S$.

Let $T \in \cM(n)$ and $S \in \cM(m)$. Consider $T \ins_e S$, the term of $T \ins S$ where $S$ is inserted into the edge $e$ of $T$. (For the term $\fM(T,S)$, this is the imaginary edge above the root.) Let $T' \in \cM(n')$ be the subtree starting from the bottom endpoint of $e$ (if the term is $\fM(T,S)$, then $T' = T$).

Let $T'' = T /^c T' \in \cM(n - n' + 1)$ be the quotient of $T$ where all of $T'$ is contracted to a single leaf with label $\ell$. Indeed, $T = T'' \circ_\ell T'$. Then
\[ T \ins_e S = T'' \circ_\ell ( \Tree [ $\ell_1$ $\ell_2$ ] )  \circ_{\ell_1} T' \circ_{\ell_2} S = T'' \circ_\ell \fM(T', S). \]

Indeed, if $e$ is the imaginary edge above the root, then $T' = T$ (as mentioned above) and $T'' = \ell \in \cM(1)$ is the single leaf with label $\ell$, which acts as the identity with respect to the coloured composition $\circ_\ell$. Then \[ T \ins_e S = \ell \circ_\ell \fM(T, S) = \fM(T,S). \]

\begin{example}
Let $T = \Tree[ [ $\alpha$ $\beta$ ] $\gamma$ ]$ and $S = \Tree [ $\delta$ $\epsilon$ ]$. We illustrate the insertion
\begin{align*}
\begin{tikzpicture}
  [baseline=-5mm,
   level distance=3.5mm,
   sibling distance=13mm,
   every node/.style={text height=0.5mm}]
  \coordinate
       child {
         child {node {$\alpha$}}
         child {node {$\beta$}}
         edge from parent node[left] {$e$}
       }
       child {node {$\gamma$}}
       ;
\end{tikzpicture}
  \ins_e
  \Tree [ $\delta$ $\epsilon$ ]
  &=
  \Tree [ [ [ $\alpha$ $\beta$ ] [ $\delta$ $\epsilon$ ] ] $\gamma$ ]
\end{align*}
as an operadic composition. Then
\[ T' = \Tree [ $\alpha$ $\beta$ ], \quad T'' = \Tree [ $\ell$ $\gamma$ ], \]
so
\[ T'' \circ_\ell \fM(T', S) = \Tree [ $\ell$ $\gamma$ ] \circ_\ell \Tree [ [ $\alpha$ $\beta$ ] [ $\delta$ $\epsilon$ ] ] = \Tree [ [ [ $\alpha$ $\beta$ ] [ $\delta$ $\epsilon$ ] ] $\gamma$ ] = T \ins_e S. \]
\end{example}

\subsection{The Guin-Oudom construction}

Guin and Oudom \cite{oudom2008lie} provide an explicit description of the universal enveloping algebra of a Lie algebra constructed from a pre-Lie operation, which is a Hopf algebra. We will repeat this construction and recall that the Connes-Kreimer Hopf algebra is the dual to a Hopf algebra produced from this procedure, where the notion of duality is based on a specific inner product described later in Section \ref{duality}.

Another exposition of the Guin-Oudom procedure and how it relates to the Lie-algebraic construction of the Connes-Kreimer Hopf algebra is found in \cite{foissy2025operads}.

Let $(\cL,\ins)$ be a pre-Lie algebra and let $\Sym(\cL)$ be the symmetric algebra of commuting polynomials over $\cL$. Then $\Sym(\cL)$ is naturally equipped with the shuffle coproduct, which is defined by
\[ \Sha(x_1 x_2 \dots x_n) = \sum_{I \subseteq [n]} x_I \otimes x_{[n] \setminus I} \]
where each $x_i \in \cL$ and, given an index set $J = \{j_1, \dots, j_k\}$, we write $x_J = x_{j_1} \cdots x_{j_k}$. The shuffle coproduct is coassociative and cocommutative.

The pre-Lie operation $\ins$ extends to an operation on $\Sym(\cL)$, which satisfies the following properties

\begin{enumerate}
  \item $A \ins 1 = A$
  \item $X \ins A Y = (X \ins A) \ins Y - X \ins (A \ins Y)$
  \item $AB \ins C = \sum_{(C)} (A \ins C_{(1)}) (B \ins C_{(2)})$
\end{enumerate}
for elements $X, Y \in \cL$ and $A,B,C \in \Sym(\cL)$. Recursively, this uniquely defines $A \ins B$ for any two monomials $A,B \in \Sym(\cL)$ and extends to all of $\Sym(\cL)$ by linearity.

We remark that if we consider property (2) with an element $Z \in \cL$, then
\[ X \in ZY = (X \ins Z) \ins Y - X \ins (Z \ins Y) \]
is equal to the associator $A(X,Z,Y)$ of $X,Z,Y$ with respect to $\ins$. Since $\cL$ is a pre-Lie algebra, the Vinberg identity $A(X,Z,Y) = A(X,Y,Z)$ holds.

The product of $\cU(\cL)$ as a Hopf algebra is not $\ins$, but rather a further modification of it.

We form another product $\ast$ on $\Sym(\cL)$ by first taking the extension of $\ins$ to $\Sym(\cL)$ and then defining 
\[ A \ast B := \sum_{(B)} (A \ins B_{(1)} ) B_{(2)} \]
or monomials $A,B$, where the sum is indexed over terms of the shuffle coproduct (using Sweedler notation). Guin and Oudom prove that $\ast$ is associative.

The difference between $\ast$ and $\ins$ is that $\ins$ represents the simultaneous insertion of \emph{all} factors in the monomial $B$ into $A$. That is, $A \ins B$ contains only terms where all trees in $B$ are simultaneously inserted into the forest $A$, meaning that every tree in $B$ becomes part of a tree in $A$. Meanwhile, $A \ast B$ allows for terms where only some of the trees in $B$ are inserted into $A$; the unused trees of $B$ remain as additional elements in the resulting forest.

\begin{example}
Let $T = \Tree [ $\alpha$ $\beta$ ]$ and $F = S_1 \sqcup S_2$, where $S_1, S_2$ are trees. Then

\begin{align*}
  T \ins F
  =
  \Tree [ [ $\alpha$ $\trinum{S_1}$ ] [ $\trinum{S_2}$ $\beta$ ] ]
  &+
  \Tree [ [ [ $\alpha$ $\trinum{S_1}$ ] $\trinum{S_2}$ ] $\beta$ ]
  +
  \Tree [ [ [ $\alpha$ $\trinum{S_2}$ ] $\trinum{S_1}$ ] $\beta$ ] \\
  &+ \Tree [ $\alpha$  [$\trinum{S_1}$ [ $\trinum{S_2}$  $\beta$ ] ] ]
  +
  \Tree [ $\alpha$  [$\trinum{S_2}$ [ $\trinum{S_1}$  $\beta$ ] ] ]
  +
  \Tree [ [ $\alpha$ $\trinum{S_2}$ ] [ $\trinum{S_1}$ $\beta$ ] ]
\end{align*}
while
\begin{align*}
  T \ast F  = T \ins F
  &+ \Tree [ [ $\alpha$ $\trinum{S_1}$ ] $\beta$ ] \sqcup S_2
   + \Tree [ $\alpha$ [ $\trinum{S_1}$  $\beta$ ] ] \sqcup S_2
   + \Tree [ [ $\alpha$  $\beta$ ] $\trinum{S_1}$ ] \sqcup S_2 \\
  &+ \Tree [ [ $\alpha$ $\trinum{S_2}$ ] $\beta$ ] \sqcup S_1
   + \Tree [ $\alpha$ [ $\trinum{S_2}$  $\beta$ ] ] \sqcup S_1
   + \Tree [ [ $\alpha$  $\beta$ ] $\trinum{S_2}$ ] \sqcup S_1 \\
   &+ T \sqcup S_1 \sqcup S_2.
\end{align*}
\end{example}

This equips $(\Sym(\cL), \ast, \Sha)$ with the structure of a noncommutative and cocommutative Hopf algebra, which is isomorphic to the universal enveloping algebra of $\cL$. We will refer to this Hopf algebra as $\cU(\cL) = (\cU(\cL), \ast, \Sha)$, as this is the only presentation of the universal enveloping algebra that we will explicitly use.

\section{Duality of Hopf algebras} \label{duality}

The notion of duality we is based on the inner product on the polynomial algebra of trees defined in Hoffman \cite{hoffman2003combinatorics}, which involves a symmetry coefficient. It turns out the same pairing restricted to binary trees also induces a duality between $\cH$, the Hopf algebra of subsets of leaves, and $\cU(\cL)$, the universal enveloping algebra induced by the edge-insertion pre-Lie algebra. We owe much of the notation here to Hoffman.

Hoffman showed that the Connes-Kreimer and Grossman-Larson Hopf algebras are dual to each other with respect to this pairing, correcting an initial attempt by Panaite to do the same \cite{panaite2000relating}.

\subsection{Symmetries of forests}

 Let $SG(F)$ be the group of automorphisms on $F$, which are graph isomorphisms that map roots to roots and that preserve implicit the directed graph structure induced by the root node. 
 
 We write $s_F = |SG(F)|$ for the cardinality of the automorphism group.

Each graph automorphism induces an automorphism on the edges of $F$.

If $F = T_1^{a_1} \cdots T_k^{a_k}$, where $T_1, \dots, T_k$ are all distinct trees and each $a_i$ is a positive integer exponent, then
\[ s_F = a_1 ! \cdots a_k! \cdot s_{T_1}^{a_1} \cdots s_{T_k}^{a_k}, \]
which follows directly from the identification
\[ SG(F) \cong \fS_{a_1} \oplus \cdots \oplus \fS_{a_k} \oplus SG(T_1)^{\oplus a_1} \oplus \cdots \oplus SG(T_k)^{\oplus a_k}. \]

When $T$ is a binary tree, the only automorphisms of $T$ involve swapping the children of interior nodes which have a pair of identical children. Hence, for binary trees $T$,
\[ s_T = 2^r, \]
where $r$ is the number of interior nodes with identical children: at each such node, the map either swaps its twin children or it does not. 

\begin{remark}
Recall from Remark~\ref{ghost} that we can view our binary trees as having an additional virtual edge at the root. The virtual edge does not affect the automorphism group because it is fixed by all automorphisms. Thus we can refer to $SG(F)$ unambiguously, whether or not we consider trees to have a virtual root edge.
\end{remark}

\begin{example}
We provide the values of $s_T$ for some small trees with labelled leaves below:

\begin{center}
\begin{tabular}{|c||c|c|c|c|c|c|c|c|c|}
\hline
$T$ & $\alpha$
    & $\CoII{$\alpha$}{$\beta$}$
    & $\CoII{$\alpha$}{$\alpha$}$
    & $\CoIII{$\alpha$}{$\beta$}{$\gamma$}$
    & $\CoIII{$\alpha$}{$\alpha$}{$\beta$}$
    & \Tree [ [ $\alpha$ $\beta$ ] [$\alpha$ $\beta$ ] ] 
    & \Tree [ [ $\alpha$ $\alpha$ ] [$\beta$ $\beta$ ] ] 
    & \Tree [ [ $\alpha$ $\alpha$ ] [$\alpha$ $\alpha$ ] ] 
    \\ \hline\hline
$s_T$ & 1 & 1 & 2 & 1 & 2 & 2 & 4 & 8 \\ \hline
\end{tabular}
\end{center}
\end{example}

\subsection{Defining the inner product}
Let $k[\cT]$ denote the polynomial algebra of nonplanar binary trees. This algebra has a linear basis of forests. The pairing is defined as
\[ \langle F, F' \rangle := s_F \delta_{F,F'} \]
for any forests $F,F' \in k[\cT]$.

We will now view the above pairing on $k[\cT]$ as a map
\[ \langle -, - \rangle :  \cU(\cL) \times \cH \to k. \]
The product of $\cH$, denoted by $\cdot_\Sha$, is defined by the identity
\[ \langle \Sha(F_1), F_2 \otimes F_3 \rangle = \langle F_1, F_2 \cdot_\Sha F_3 \rangle. \]
Since $\Sha$ is cocommutative, $\cdot_\Sha$ is commutative. The coproduct of $\cH$ is defined by
\[ \langle F_1 \ast F_2, F_3 \rangle = \langle F_2 \otimes F_1, \Delta_\ast(F_3) \rangle; \]
note that the positions of $F_2$ and $F_1$ are swapped on the right-hand side.

This makes $(\cH, \cdot_\Sha, \Delta_\ast)$ a Hopf algebra which is dual to the opposite of $(\cU(\cL), \ast, \Sha)$. It remains to show that $\cdot_\Sha$ is the disjoint union $\sqcup$, while $\Delta_\ast$ is the same coproduct $\Delta$ that was defined in Section \ref{newhopf}.

\subsection{Showing that $\cdot_\Sha = \sqcup$} Let $F_1 = T_1^{a_1} \cdots T_{k}^{a_k}$ be a forest, with $T_1, \dots, T_k$ being a collection of distinct tree classes. Let $F_2 = T_1^{b_1} \cdots T_k^{b_k}$ be such that $b_i \leq a_i$ and let $F_3 = T_1^{a_1 - b_1} \cdots T_k^{a_k - b_k}$. Then $F_2 \otimes F_3$ appears in $\Sha(F)$ with coefficient
\[ \prod_{i=1}^k {a_i \choose b_i} = \prod_{i=1}^k \frac{a_i!}{b_i! (a_i - b_i)!} \]
and so
\begin{align*}
  \langle F_1, F_2 \cdot_\Sha F_3 \rangle
  &= \langle \Sha(F_1), F_2 \otimes F_3 \rangle \\
  &= s_{F_2}  s_{F_3}   \Big( \prod_{i=1}^k \frac{a_i!}{b_i! (a_i - b_i)!} \Big) \\
  &= \Big( \prod_{i=1}^k b_i!  s_{T_i}^{b_i} \Big) \Big( \prod_{i=1}^k (a_i - b_i)!  s_{T_i}^{a_i - b_i} \Big) \Big( \prod_{i=1}^k \frac{a_i!}{b_i! (a_i - b_i)!} \Big) \\
  &= \prod_{i=1}^k a_i!  s_{T_i}^{a_i} \\
  &= s_{F_1} \\
  &= \langle F_1, F_2 \sqcup F_3 \rangle.
\end{align*}
 Hence $\cdot_\Sha = \sqcup$.

\subsection{Showing that $\Delta = \Delta_*$}
Our goal is to show that
\[ \langle F_1 \ast F_2, F_3 \rangle = \langle F_2 \otimes F_1, \Delta(F_3) \rangle. \]

In the following proofs, it will be convenient to view all trees as having a virtual root edge. When speaking of the edge set $E(F)$, the root edge of each tree in $F$ is included; however, its source is not considered a leaf.

\subsubsection{Enumeration of cuts}

Given a forest $F \in k\{\cT\}$, let $\cC(F)$ be the collection of binary-admissible cuts on $F$. Each $C \in \cC(F)$ is a subset of $E(F)$ such that
\begin{enumerate}
  \item no pair of edges in $C$ appear on the same path from root to leaf of any tree in $F$;
  \item no two edges in $C$ share the same parent vertex.
\end{enumerate}
This is the natural way to extend the notion of binary-admissible cuts on individual trees to forests using multiplicativity of $\Delta$.

The coproduct of $F$ is a sum over all cuts in $\cC(F)$:
\[ \Delta(F) = \sum_{C \in \cC(F)} P^C(F) \otimes R^C(T) \]
where $P^C(F)$ is the forest obtained by taking all the trees below each deleted edge in $C$, and $R^C(T)$ is the binary forest containing all of the rooted trees above the deleted edges.

Note that if the root edge of a tree $T$ in $F$ lies in $C$, then $T$ appears in $P^C(F)$ and has no remainder in $R^C(F)$. For instance, the term $F \otimes 1$ corresponds to the cut where all root edges are selected.

\subsubsection{Enumeration of graphs}

Given two forests $F_1, F_2 \in k\{\cT\}$, let $\cG(F_1, F_2)$ be the collection of distinct grafts of $F_1$ into $F_2$. When speaking of grafts we include those terms introduced by the extension of $\ins$ on $\Sym(\cL)$ to $\ins$, so certain trees in $F_1$ may not be inserted into any edge in $F_2$, remaining instead as a term in the resulting forest. Each graft $G$ contributes a single forest $F_2 \ast_G F_1$ to $F_2 \ast F_1$, so that
\[ F_2 \ast F_1 = \sum_{G \in \cG(F_1, F_2)} F_2 \ast_G F_1.\]
Each graft consists of the following data:
\begin{enumerate}
  \item a map $\mu_G$ from the trees in $F_1$ to $E(F_2) \sqcup \{e_0\}$, where $e_0$ is a dummy element which is the image of any tree not inserted into an edge of $F_2$;
  \item a linear order on $\mu_G^{-1}(e)$ for every $e \in E(F_2)$.
\end{enumerate}
No linear order is imposed on the preimage of $e_0$.

\subsubsection{Multiplicities of cuts and grafts}

Let $F_1, F_2$ and $F_3$ be forests. Following the notation of Hoffman, we define 
\begin{align*}
  n(F_1, F_2; F_3)  &= | \{ G \in \cG(F_1, F_2) : F_2 \ast_G F_1 = F_3 \}|  \\
  &= \text{the coefficient of $F_3$ in $F_2 \ast F_1$}
\end{align*}
and
\begin{align*}
  m(F_1, F_2; F_3) &= | \{ C \in \cC(F_3) : P^C(F_3) = F_1 \text{ and } R^C(F_3) = F_2 \} | \\
  &= \text{the coefficient of $F_1 \otimes F_2$ in $\Delta(F_3)$},
\end{align*}
so that
\[ \langle F_1 \ast F_2, F_3 \rangle = n(F_1, F_2; F_3) \langle F_3, F_3 \rangle = n(F_1, F_2; F_3) s_{F_3} \]
and
\begin{align*}
  \langle F_2 \otimes F_1, \Delta(F_3) \rangle
  &= m(F_1, F_2; F_3) \langle F_2 \otimes F_1, F_2 \otimes F_1 \rangle \\
  &= m(F_1, F_2; F_3) \langle F_2, F_2 \rangle \langle F_1, F_1 \rangle
   = m(F_1, F_2; F_3) s_{F_1} s_{F_2}.
\end{align*}

\begin{theorem}
  Let $F_1, F_2$ and $F_3$ be three forests. Then
  \[ \langle F_1 \ast F_2, F_3   \rangle = \langle F_2 \otimes F_1, \Delta(F_3) \rangle, \]
  so $\Delta = \Delta_*$.
\end{theorem}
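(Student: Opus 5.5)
Given the computations just made, it suffices to prove the identity
\[ n(F_1, F_2; F_3)\, s_{F_3} = m(F_1, F_2; F_3)\, s_{F_1} s_{F_2}. \]
Following Hoffman's argument for Connes--Kreimer/Grossman--Larson, I will prove this by double counting with labelled objects. Write $\cG^\ast(F_1,F_2;F_3)$ for the set of pairs $(G,\phi)$, where $G \in \cG(F_1,F_2)$ and $\phi : F_2 \ast_G F_1 \to F_3$ is an isomorphism. Write $\cC^\ast(F_1,F_2;F_3)$ for the set of triples $(C,\psi_1,\psi_2)$, where $C \in \cC(F_3)$ and $\psi_1 : P^C(F_3)\to F_1$ and $\psi_2 : R^C(F_3) \to F_2$ are isomorphisms. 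Counting each set directly gives
\[ |\cG^\ast| = n(F_1,F_2;F_3)\, s_{F_3}, \qquad |\cC^\ast| = m(F_1,F_2;F_3)\, s_{F_1} s_{F_2}, \]
since for each admissible graft or cut the isomorphisms form a torsor under the relevant automorphism group. The theorem then reduces to constructing a bijection $\cG^\ast \to \cC^\ast$.

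To go from a graft to a cut, take $(G,\phi)$. Each tree $S$ of $F_1$ inserted at an edge $e$ of $F_2$ is attached by a new edge, namely the former root edge of $S$, to a new vertex $v$ created by splitting $e$. Let $C$ be the set of $\phi$-images of the root edges of all trees of $F_1$, including those mapped to $e_0$, which stay as separate components and whose virtual root edges are cut. I need to check that $C$ is binary-admissible.
\begin{itemize}
\item No two such edges share a parent, because every inserted tree gets its own fresh vertex.
\item No two lie on a common root-to-leaf path, because each inserted tree is attached as a whole subtree. Trees inserted later along the same edge sit above or below the earlier ones, never inside them; this is exactly where the linear order on $\mu_G^{-1}(e)$ matters.
\end{itemize}
Cutting $C$ returns a copy of $F_1$ as the pruned forest, with $\psi_1$ induced by $\phi$. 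On the remainder side, each fresh vertex becomes unary, and $\Pi_{d,\rho}$ contracts it, merging the two halves of each split edge. A chain of several insertions on one edge contracts back to the single edge $e$. So $\phi$ induces $\psi_2 : R^C(F_3) \to F_2$.

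For the inverse map, take $(C,\psi_1,\psi_2)$. Each cut edge $c \in C$ has parent vertex $v_c$, and because $C$ is binary-admissible, $v_c$ keeps exactly one other child. After contraction, $v_c$ lies on a unique edge of $R^C(F_3)$, which is a maximal chain of such vertices, or is the virtual root edge when $c$ was the root edge of a whole tree of $F_3$. Via $\psi_2$ this gives the edge $\mu_G(\psi_1(\text{subtree below } c))$ of $F_2$. The order of the vertices $v_c$ along that chain gives the linear order on the preimage. Root edges of entire components go to $e_0$.

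The main obstacle is showing that these two maps really are mutually inverse, and that the reconstruction lands in $\cG(F_1,F_2)$ as a set of labelled grafts without over- or under-counting. The delicate points are:
\begin{itemize}
\item distinguishing insertion at a component's virtual root edge, i.e.\ Merge, from leaving a tree unused at $e_0$;
\item identical trees of $F_1$, where $\psi_1$ is needed to say which copy of $F_1$ is being inserted.
\end{itemize}
Because we work with rigid labelled data, namely pairs involving explicit isomorphisms rather than isomorphism classes, these symmetries are absorbed into the counts $s_{F_1}, s_{F_2}, s_{F_3}$, and the bijection can be checked locally edge by edge. The remaining work is to confirm that the contraction $\Pi_{d,\rho}$ is precisely undone by re-splitting the same edge in the recorded order. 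This is the content of the earlier remark that cutting one edge collapses two edges into one.
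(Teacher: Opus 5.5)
Your argument is correct, and it takes a different route from the paper's. The paper works with unlabelled data. It splits the set $\cC'$ of relevant cuts into $SG(F_3)$-orbits. It replaces $\cG'$ by a set $\cG''$ of grafts whose decorations record only isomorphism classes, and splits $\cG''$ into $SG(F_2)$-orbits. It matches cut-classes with graft-classes and then compares cardinalities using three ingredients: the orbit--stabilizer theorem, an identification of $\Stab_\cC(\fF(C_i),F_3)$ with $\Stab_\cG(G_i,F_2)$, and a factor $a_1!\cdots a_r!$ relating $\cG'$ to $\cG''$.

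You instead rigidify. Counting pairs $(G,\phi)$ and triples $(C,\psi_1,\psi_2)$ turns every symmetry factor into the size of a torsor. The whole proof then rests on one explicit local bijection: cut the former root edges of the inserted trees and contract the resulting unary vertices; conversely, re-split each contracted chain at the vertices $v_c$ in their top-to-bottom order. There are no stabilizers to compute.

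This is more robust on exactly the delicate points you flag. Trees sent to $e_0$ carry no linear order. So if $b_i\ge 2$ copies of $T_i$ are left unused, the number of labelled grafts over an abstracted one is $\prod_i a_i!/b_i!$, not $a_1!\cdots a_r!$. This already happens for $F_1=\bullet\bullet$, $F_2$ the two-leaf tree and $F_3=F_2\sqcup\bullet\sqcup\bullet$, where $|\cG'|=|\cG''|=1$. The class-based count has to correct for this, whereas in your count $\psi_1$ and $\phi$ record which copy is which, so the correction is automatic. In exchange, the paper's version gives the finer statement that cut-classes and graft-classes correspond bijectively, which its remark contrasting edge insertion with vertex insertion relies on.

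One phrasing slip to fix in your inverse map. The chain containing $v_c$ is a virtual root edge of $R^C(F_3)$ when $v_c$ and every vertex above it in its tree are parents of cut edges, so that the chain reaches a root of $F_3$ (a Merge-type insertion). It is not the case where $c$ is itself a virtual root edge. In that case there is no $v_c$, and $c$ is sent to $e_0$, as you say in the next sentence.
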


\begin{proof}
With $F_1, F_2, F_3$ fixed, our goal is to show that
\[ n(F_1, F_2; F_3) s_{F_3} = m(F_1, F_2; F_3) s_{F_1} s_{F_2}. \]
Define
\[ \cC' = \{  C \in \cC(F_3) : P^C(F_3) = F_1 \text{ and } R^C(F_3) = F_2 \} \] 
so that $n(F_1, F_2; F_3) = |\cC'|$.

Consider any cut $C \in \cC'$, viewing it as a subset of $E(F_3)$. Take an automorphism $\sigma \in SG(F_3)$ and let $SG(F_3)$ act on $2^{E(F_3)}$. Let $\Orb_\cC(C, F_3)$ be the orbit of $C \in 2^{E(F_3)}$ under the action of $SG(F_3)$. Then $\sigma(C) \in \cC'$, so $\Orb_\cC(C, F_3) \subseteq \cC'$.

However, there may be other cuts in $\cC'$ which are not in the orbit of $C$. Thus $\cC'$ has a decomposition as a disjoint union of \emph{cut-classes}, each of which is the orbit of any cut in its class:
\[ \cC' = \Orb_\cC(C_1, F_3) \sqcup \cdots \sqcup \Orb_\cC(C_k, F_3) \]
where each $C_i$ is a representative of a distinct cut-class.

With the same forests $F_1, F_2, F_3$ as above, we define
\[ \cG' = \{ G \in \cG(F_1, F_2) : F_2 \ast_G F_1 = F_3 \} \]
so that $m(F_1, F_2; F_3) = |\cG'|$, and we partition $\cG'$ into graft-classes.

We will abstract some of the data in a graft. Instead of a linear order on the trees in $\mu_G^{-1}(e)$ for each edge in $E(F_2)$, the linear order now only considers the isomorphism classes of each tree. Now each $G \in \cG'$ consists of an edge-set $E_G \subseteq E(F_3) \sqcup \{e_0\}$, along with a decoration $L_G(e)$, a finite sequence of tree-isomorphism classes, for each $e \in G \setminus \{e_0\}$. For each edge $e$ in $F_2$ to which some trees from $F_1$ is grafted, we list the trees from top to bottom, starting from the source of $e$ to its target. We write $G = (E_G, L_G)$. 

Now let $\sigma \in SG(F_2)$. We allow $SG(F_2)$ to act on grafts by defining, for a graft $G$,
\[ \sigma(G) = (E_{\sigma(G)}, L_{\sigma(G)}) \]
where $E_{\sigma(G)} = \sigma(E_G)$ and $L_{\sigma(G)}(\sigma(e)) = L_G(e)$ for each $e \in E_G$. Now an automorphism also carries the decoration on each edge to its image.

Let $\Orb_\cG(G, F_2)$ denote the orbit of $G$ under the action of $SG(F_2)$ on grafts.  Let $F_1 = T_1^{a_i} \cdots T_r^{a_r}$, where each $T_i$ is represents a distinct isomorphism class of trees. Let $\cG''$ be the disjoint union of these orbits, known as graft-classes:
\[ \cG'' =  \Orb_\cG(G_1, F_2) \sqcup \cdots \sqcup \Orb_\cG(G_\ell, F_2) \]

Finally, if $F_1 = T_1^{a_i} \cdots T_r^{a_r}$, where each $T_i$ represents a distinct isomorphism class of tree, then $\cG'$ is a disjoint union of \[ a_1! a_2! \cdots a_r! = \frac{s_{F_1}}{s_{T_1}^{a_1} \cdots s_{T_r}^{a_r}} \] copies of $\cG''$; each copy amounts to an explicit choice of trees in each decoration.

Now we claim that there is a bijection between cut-classes and graft-classes. We construct two maps: one from cut-classes to graft-classes and one from graft-classes to cut-classes.

\begin{itemize}
 \item  Let $C$ be a representative of a particular cut-class in $\cC'$. If there is a path subgraph $P$ on $C$ such that the sibling of any edge of $P$ other than the first and last are severed, then $P$ becomes condensed to a single edge in $F_2 = R^C(F_3)$. We extract all path subgraphs of $C$ fitting this description and match each subgraph to the remaining edge in $F_2$. If the root of any tree in $F_3$ is severed, then that edge is mapped to the virtual edge $e_0$. This uniquely determines a graft in $\cG''$.

 \item  On the other hand, let $G$ represent of a graft-class in $\cG''$. Let $C$ be the cut of $F_3$ where all the roots of grafted trees in $F_1$ are severed; this uniquely determines a cut-class in $\cC'$.
\end{itemize}
  
  These two assignments are inverse to each other.

Now that we know that the cut-classes and graft-classes are in bijection, we can re-index the representatives so that $\Orb_\cC(C_i, F_2)$ is the cut-class corresponding to the graft-class $\Orb_\cG(G_i, F_2)$.

Since each of $\cC'$ and $\cG'$ is a disjoint union of orbits, their cardinalities are given by the sums
\[ |\cC'| = \sum_{i=1}^k |\Orb_\cC(C_i, F_3)|, \quad |\cG'| = \sum_{i=1}^k a_1! \cdots a_r! |\Orb_\cG(C_i, F_2)|. \]
The orbit-stabilizer theorem states that
\[ |\Orb_\cC(C_i, F_3)| = \frac{s_{F_3}}{|\Stab_\cC(C_i, F_3)|} \]
and
\[ |\Orb_\cG(G_i, F_2)| =  \frac{s_{F_2}}{|\Stab_\cG(G_i, F_2)|}, \]
where $\Stab_\cC(C_i, F_3)$ is the subgroup of $SG(F_3)$ fixing the edges in $C_i$ and $\Stab_\cG(G_i, F_2)$ fixes the edges in $E_{G_i}$ (which also fixes all their decorations).

Let $\fF(C_i) \subseteq E(F_3)$ be the collection of all edges in $F_3$ appearing below the edges in $C_i$. Recall that $F_1 = T_1^{a_i} \cdots T_r^{a_r}$, where each $T_i$ is represents a distinct isomorphism class of trees. Then
\[ \Stab_\cC(C_i, F_3) \cong \Stab_\cC(\fF(C_i), F_3) \oplus SG(T_1)^{\oplus a_1} \oplus \cdots \oplus SG(T_r)^{\oplus a_r}, \]
so
\[ |\Orb_\cC(C_i, F_3)| = \frac{s_{F_3}}{|\Stab_\cC(\fF(C_i), F_3)| \cdot s_{T_1}^{a_1} \cdots s_{T_r}^{a_r}} \]
and so
\[ |\cC'| = \sum_{i=1}^k |\Orb_\cC(C_i, F_3)| = \frac{s_{F_3}}{s_{T_1}^{a_1} \cdots s_{T_r}^{a_r}} \sum_{i=1}^k \frac{1}{|\Stab_\cC(\fF(C_i), F_3)}  \]
while
\[ |\cG'| = a_1! \cdots a_r! s_{F_2} \sum_{i=1}^k \frac{1}{|\Stab_\cG(G_i, F_2)|} \]
But there is an identification of $\Stab_\cC(\fF(C_i), F_3)$ with $\Stab_\cG(G_i, F_2)$ for each $i$, so the two summations are equal and
\[ |\cC'| \frac{ s_{T_1}^{a_1} \cdots s_{T_r}^{a_r} }{s_{F_3}} = |\cG'|\frac{1 }{ a_1! \cdots a_r! s_{F_2}  } \]
Equivalently,
\[  |\cC'| \cdot (a_1! \cdots a_r! \cdot  s_{T_1}^{a_1} \cdots s_{T_r}^{a_r}) \cdot s_{F_2} =  |\cG'| \cdot s_{F_3}\]
hence
\[ n(F_1, F_2; F_3) s_{F_1} s_{F_2}  = m(F_1, F_2; F_3) s_{F_3}. \]
\end{proof}

\begin{remark}
  The fact that $\cC'$ decomposes into a union of cut-classes is the key difference between the edge-insertion and vertex-insertion pre-Lie algebras. When considering vertex-insertion, each $\cC'$ is the orbit of a single cut.

  Similarly, compare the partitioning of $\cG''$ to the case of the vertex-insertion Lie algebra. A vertex-graft does not need to keep track of any linear order on the inserted trees, since all trees are non-planar. Simultaneous insertion of trees at a vertex is independent of any ordering on the trees.
\end{remark}

\forestset{
    bcut/.style={edge path={\noexpand\path[\forestoption{edge}](!u.parent anchor)-- node[red] {\scalebox{2}{$\diagdown$}} (.child anchor)\forestoption{edge label};}},
    fcut/.style={edge path={\noexpand\path[\forestoption{edge}](!u.parent anchor)-- node[red] {\scalebox{2}{$\diagup$}} (.child anchor)\forestoption{edge label};}},
}

\begin{example}
This example illustrates the notion of cut-classes and graft-classes used in the previous proof. Let
\[ F_1 = \bintree{[[][]]}, \quad F_2 = \bullet \bullet \bintree{[[][]]}, \quad F_3 =  \bintree{[[[[[][]][]][[][]]][]]}, \]
which have symmetry coefficients
\[ s_{F_1} = 2, s_{F_2} = 4, s_{F_3} = 4.  \]

\begin{center}
\begin{tabular}{ |c||c| }
     \hline
Cut-classes in $\cC'$ & Graft-classes in $\cG''$ \\
     \hline\hline
\(
\bintree{[[[[[,bcut[][]][]][[,bcut][]]][,fcut]]]},
\bintree{[[[[[,bcut[][]][]][[][,fcut]]][,fcut]]]}
\)

&

\( \bigbintree{[
[,edge={blue,very thick},edge label={node[blue,midway,right,font=\scriptsize]{ $(\bullet)$ }}
[,edge={blue,very thick},edge label={node[blue,midway,left,font=\scriptsize]{ $(\wedge)$ }}]
[,edge={blue,very thick},edge label={node[blue,midway,right,font=\scriptsize]{ $(\bullet)$ }}
]
]]},
 \bigbintree{[
[,edge={blue,very thick},edge label={node[blue,midway,right,font=\scriptsize]{ $(\bullet)$ }}
[,edge={blue,very thick},edge label={node[blue,midway,left,font=\scriptsize]{ $(\bullet)$ }}]
[,edge={blue,very thick},edge label={node[blue,midway,right,font=\scriptsize]{ $(\wedge)$ }}
]
]]} \) \\ \hline

\(
\bintree{[[[[[[][,fcut]][]][,fcut[][]]][,fcut]]]},
\bintree{[[[[[[,bcut][]][]][,fcut[][]]][,fcut]]]}
\)

&

\( \bigbintree{[[,edge={blue,very thick},edge label={node[blue,midway,right,font=\scriptsize]{ $(\bullet, \wedge)$ }}[,edge={blue,very thick},edge label={node[blue,midway,left,font=\scriptsize]{ $(\bullet)$ }}][]]]}
,

\bigbintree{[[,edge={blue,very thick},edge label={node[blue,midway,right,font=\scriptsize]{ $(\bullet, \wedge)$ }}[][,edge={blue,very thick},edge label={node[blue,midway,right,font=\scriptsize]{ $(\bullet)$ }}]]]} \)

\\ \hline

\(
\bintree{[[[[[[][]][,fcut]][,fcut[][]]][,fcut]]]}
\)

&

\( \bigbintree{[[,edge={blue,very thick},edge label={node[blue,midway,right,font=\scriptsize]{ $(\bullet, \wedge, \bullet)$ }}[][]]]} \) \\ \hline

\(
\bintree{[[[[[[][,fcut]][,fcut]][,fcut[][]]][]]]},
\bintree{[[[[[[,bcut][]][,fcut]][,fcut[][]]][]]]}
\)

&

\( \bigbintree{[[[,edge={blue,very thick},edge label={node[blue,midway,left,font=\scriptsize]{ $(\wedge, \bullet, \bullet)$ }}][]]]},
\bigbintree{[[[][,edge={blue,very thick},edge label={node[blue,midway,right,font=\scriptsize]{ $(\wedge, \bullet, \bullet)$ }}]]]}\)
\\ \hline
\end{tabular}
\end{center}

Since $F_2$ contains two copies of $\bullet$ and one copy of $\wedge$, there are $2! \cdot 1! = 2$ disjoint copies of $\cG''$ in $\cG'$. So $|\cC'| = 7$ and $|\cG'| = 2! \cdot |\cG''| = 14$. Hence,
\[ |\cC'| s_{F_1} s_{F_2} = 7 \cdot 2 \cdot 4 = 56 = 14 \cdot 4 = |\cG'| s_{F_3}. \]
\end{example}

\section{The pre-Lie exponential} \label{consequences}

We will specialize the multiplicities $n(F_1, F_2; F_3)$ and $m(F_1, F_2; F_3)$ to the case where $F_2 = T$ is a tree and $F_1 = \bullet$. Following Hoffman's notation, define the growth operator $\fN : k\{\cT_n\} \to k \{\cT_{n+1}\}$ by
\[ \fN(T) = T \ins \bullet = \sum_{T' \in \cT} n(\bullet, T; T') T' \]
and the pruning operator $\fP : k\{\cT_n\} \to k\{\cT_{n-1}\}$ by
\[ \fP(T) = \sum_{T' \in \cT} m(\bullet, T'; T) T'. \]
Note that both operators are written as a sum over all isomorphism classes of trees, regardless of degree. This is reasonable because $n(\bullet, T; T') = 0$ whenever $|T| \neq |T'| - 1$ and $m(\bullet, T'; T) = 0$ whenever $|T| \neq |T'| + 1$, so $\fN(T)$ and $\fP(T)$ are finite and homogeneous.

This indexing also does not produce any terms which are a forest of more than one tree, since we work with the original $\ins$ operator instead of its extension to $\ast$.

\begin{proposition}
  The operators $\fN$ and $\fP$ are adjoint on $k\{\cT\}$ with respect to the pairing $\langle -, - \rangle$.
\end{proposition}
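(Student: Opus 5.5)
We reduce adjointness to the duality theorem of Section \ref{duality}. By bilinearity it suffices to check, for all trees $T, T' \in \cT$,
\[ \langle \fN(T), T' \rangle = \langle T, \fP(T') \rangle. \]
We will compute each side in terms of the multiplicities $n$ and $m$ and then apply the identity from that theorem.

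For the left-hand side, the definition of $\fN$ and the pairing give
\[ \langle \fN(T), T' \rangle = \sum_{T''} n(\bullet, T; T'') \langle T'', T' \rangle = n(\bullet, T; T')\, s_{T'}. \]
Here $n(\bullet,T;T')$ is the coefficient of $T'$ in $T \ins \bullet$. Because $T'$ is a single tree, it is also the coefficient of $T'$ in $T \ast \bullet$. Indeed, $T \ast \bullet = T \ins \bullet + T \sqcup \bullet$, and the extra term is a two-tree forest, which cannot equal $T'$. So this $n$ agrees with the $n$ in the proof of the duality theorem, taken with $F_1 = \bullet$, $F_2 = T$, $F_3 = T'$.

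For the right-hand side,
\[ \langle T, \fP(T') \rangle = \sum_{T''} m(\bullet, T''; T') \langle T, T'' \rangle = m(\bullet, T; T')\, s_T. \]
Here $m(\bullet,T;T')$ counts the binary-admissible cuts of $T'$ with pruned part $\bullet$ and remainder $T$.

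The duality theorem (in the form established at the end of its proof, with the roles of $n$ and $m$ as used there) gives
\[ n(\bullet, T; T')\, s_{T'} = m(\bullet, T; T')\, s_{\bullet}\, s_T . \]
Since $s_\bullet = 1$, the two sides above agree. When the degrees do not match, both sides vanish, so no separate case is needed.

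The main obstacle is bookkeeping. The paper's definitions of $n$ and $m$ and its final displayed identity swap which count attaches to $s_{F_3}$ versus $s_{F_1}s_{F_2}$, so I would fix the convention $\langle F_1 \ast F_2, F_3\rangle = \langle F_2\otimes F_1, \Delta F_3\rangle$ and read off the identity directly from it. Concretely, $\langle T \ast \bullet, T'\rangle$ equals the grafting count times $s_{T'}$. On the other side, $\langle \bullet \otimes T, \Delta T'\rangle$ equals the cutting count times $s_\bullet s_T$. Equating these two expressions is exactly the desired adjointness.
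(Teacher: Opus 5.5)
Your proposal is correct and is essentially the paper's argument: expand both pairings through the multiplicities $n$ and $m$, then invoke the duality identity with $F_1=\bullet$. Your bookkeeping, $\langle \fN(T),T'\rangle = n(\bullet,T;T')\,s_{T'}$ and $\langle T,\fP(T')\rangle = m(\bullet,T;T')\,s_T$, is the correct one, whereas the paper's displayed computation swaps these subscripts; for $T=C_3$ and $T'$ the balanced four-leaf tree one has $n=1$, $m=4$, $s_T=2$, $s_{T'}=8$, so only your version balances.
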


\begin{proof}
  It is enough to check that $\langle \fN(T), T' \rangle = \langle T, \fP(T') \rangle$. Indeed,
  \[  \langle \fN(T), T' \rangle = \langle \sum_{T'' \in \cT} n(\bullet, T; T'') T'', T' \rangle = n(\bullet, T; T') \langle T', T' \rangle = n(\bullet, T; T') s_{T} \] 
  and
  \[  \langle T, \fP(T') \rangle = \langle T, \sum_{T'' \in \cT} m(\bullet, T''; T') T'' \rangle = m(\bullet, T; T') \langle T, T\rangle = m(\bullet, T; T') s_{T'}, \]
  and $n(\bullet, T; T') s_{T} = m(\bullet, T; T') s_{T'}$.
\end{proof}

As a corollary, $\fN^k$ and $\fP^k$ are adjoint for any $k \geq 1$.

\begin{lemma}
  Let $k \geq 2$. If $T \in \cT_k$, then $\fP^{k-1}(T) = k! \cdot \bullet$.
\end{lemma}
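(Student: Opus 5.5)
We will prove the statement by tracking the total coefficient mass of $\fP$, working in the unlabelled setting where $\cT_1 = \{\bullet\}$ (with labelled leaves, $\cT_1$ would contain several single-vertex trees and the statement would have to be read modulo labels). Let $\epsilon_0 : k\{\cT\} \to k$ be the linear map sending every tree to $1$, so $\epsilon_0$ of an element is the sum of its coefficients. The key claim is the following: for every $j \geq 2$ and every $T \in \cT_j$, $\fP(T)$ is a linear combination of trees in $\cT_{j-1}$ with nonnegative integer coefficients satisfying $\epsilon_0(\fP(T)) = j$.

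To prove the claim, we identify the terms $P^C(T) \otimes R^C(T)$ of $\Delta(T)$ with $P^C(T) = \bullet$. By the Proposition identifying terms of $\Delta(T)$ with subsets of leaves of $T$, the left channel $P^C(T)$ is the forest spanned by the chosen subset of leaves. This forest equals the single-vertex tree $\bullet$ exactly when the subset is a singleton $\{\lambda\}$, i.e.\ when the cut is the single edge above the leaf $\lambda$. Any other cut places at least two leaves in the left channel, and the total cut gives $T \otimes 1$ with $T \neq \bullet$ since $j \geq 2$, so none of these contribute.

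Next we check that each singleton cut really contributes a tree with $j-1$ leaves. Cutting the edge above a leaf $\lambda$ is binary-admissible, since it is a single edge. Its parent vertex is left with one child, and $\Pi_{d,\rho}$ contracts it, so the remainder is a full binary tree with no novel leaves. Hence $\Phi$ does not delete it, and $R^C(T) \in \cT_{j-1}$. For $j = 2$ the remainder is a path that contracts to the sibling leaf $\bullet$, which is original, so it also survives. Thus $m(\bullet, T'; T)$ counts the leaves $\lambda$ whose removal yields $T'$, and summing over $T'$ gives exactly $j$. This verification, that no singleton cut is killed by $\Phi$ and no non-singleton cut yields $\bullet$ on the left, is the only real point, and it is immediate from the definitions of $\Pi_{d,\rho}$ and $\Phi$.

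Finally we induct. By linearity, $\epsilon_0(\fP(x)) = j\,\epsilon_0(x)$ for every $x \in k\{\cT_j\}$ with $j \geq 2$. Applying this repeatedly, $\fP^{i}(T) \in k\{\cT_{k-i}\}$ and
\[ \epsilon_0(\fP^{i}(T)) = k(k-1)\cdots(k-i+1) \quad \text{for } 0 \leq i \leq k-1. \]
At $i = k-1$ we land in $k\{\cT_1\} = k\,\bullet$, and the total coefficient is $k!/1! = k!$. Hence $\fP^{k-1}(T) = k! \cdot \bullet$.
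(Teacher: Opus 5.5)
Your proof is correct and rests on the same fact as the paper's proof: $\fP(T)$ is a sum of exactly $j$ trees in $\cT_{j-1}$, one for each leaf severed. The paper iterates this by induction on $k$, applying the hypothesis $\fP^{k-2}(T_i) = (k-1)!\cdot\bullet$ to each of the $k$ terms, whereas you package the same iteration through the coefficient-sum functional $\epsilon_0$ and the fact that $k\{\cT_1\}$ is one-dimensional; your explicit check that singleton cuts survive $\Pi_{d,\rho}$ and $\Phi$ (including the $j=2$ path case) fills in a step the paper only asserts.
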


An intuitive way to see this is that the coefficient of $\bullet$ in $\fP^{k-1}(T)$ is the number of ways to reduce $T$ to a single leaf by progressively removing one leaf at a time, assuming that all leaves are distinct. This is formalized in an inductive proof.
  
\begin{proof}

  For the base case, consider $k=2$. The only unlabelled tree of degree $2$ is $\bintree{[[][]]}$ and
  \[ \fP( \bintree{[[][]]} ) = 2 \bullet = 2! \cdot \bullet. \]
  Now let $k > 2$ and suppose that $\fP^{k-2}(T') = (k-1)! \cdot \bullet$ for all $T' \in \cT_{k-1}$. Then $\fP(T)$ is a sum of $k$ trees $T_1, \dots, T_{k} \in \cT_{k-1}$, not necessarily distinct, each of which corresponds to a choice of leaf in $T$ to sever. Hence
  \[ \fP^{k-1}(T) = \fP^{k-2}(\fP(T) )
  = \fP^{k-2}\Big( \sum_{i=1}^k T_i  \Big)
  = \sum_{i=1}^k \fP^{k-2}(T_i) = \sum_{i=1}^k (k-1)! \cdot \bullet =  k! \cdot \bullet. \]
\end{proof}

The pre-Lie exponential of an element $x \in \cL$ is defined as the formal series
\[ W(x) = x + \frac{1}{2!} x \ins x + \frac{1}{3!} (x \ins x) \ins x + \cdots. \]
When equipped with a multiplication operation
\[ W(x) \star W(y) = W(C(x,y)) \]
in terms of the Baker-Campbell-Hausdorff formula
\[ C(x,y) = x + y + \frac{1}{2} [x,y] + \frac{1}{12} ( [x,[x,y]] + [y,[y,x]] ) - \frac{1}{24} [y,[x,[x,y]]] + \cdots, \]
the formal series $W(x)$ have the structure of a multiplicative group. See \cite{marcolli2015graph} for more references and Section 3.6 of \cite{CartierPatras} for the formula of the full expansion of the Baker-Campbell-Hausdorff formula.

Let $\bullet$ be an unlabelled single-vertex tree in the pre-Lie algebra of edge-insertion. Then $W(\bullet)$ is a formal sum over repeated applications of the growth operator $\fN$ to $\bullet$,
\[ W(\bullet) = \sum_{k=1}^\infty \frac{1}{k!} \fN^{k-1}(\bullet), \]
where $\fN^0(\bullet) = \bullet$.

Since each isomorphism class of unlabelled trees with $k$ leaves appears as a term of $\fN^{k-1}(\bullet)$, each linear generator has a positive coefficient in $W(\bullet)$. It turns out that the coefficient of each tree in $W(\bullet)$ has a combinatorial interpretation.

The first few iterated insertions of $\bullet$ are:
\begin{align*}
  \fN(\bullet) &= \bintree{[[][]]} \\
  \fN^2(\bullet)  &= 3 \bintree{[[[][]][]]} \\
  \fN^3(\bullet)  &= 12 \bintree{[[[[][]][]][]]} + 3 \bintree{[[[][]][[][]]]} \\
  \fN^4(\bullet)  &= 60 \bintree{[[[[[][]][]][]][]]} + 15 \bintree{[[[[][]][[][]]][]]} + 30 \bintree{[[[[][]][]][[][]]]}
\end{align*}
Hence, the terms in $W(\bullet)$ of degree up to $5$ are
\begin{align*}
W(\bullet) = \bullet
+ \frac{1}{2} \bintree{[[][]]} 
&+ \frac{1}{2} \bintree{[[[][]][]]} 
+ \frac{1}{2} \bintree{[[[[][]][]][]]} + \frac{1}{8} \bintree{[[[][]][[][]]]}  \\
&+ \frac{1}{2} \bintree{[[[[[][]][]][]][]]}  + \frac{1}{8}  \bintree{[[[[][]][[][]]][]]} + \frac{1}{4}  \bintree{[[[[][]][]][[][]]]} + \cdots.
\end{align*}
The coefficient of each term in $W(\bullet)$ is inverse to a power of two. This is formalized below.

\begin{proposition}
  The coefficient of $T$ in $W(\bullet)$ is $1/s_T$.
\end{proposition}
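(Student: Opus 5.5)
Fix $T \in \cT_k$. Since $W(\bullet) = \sum_{j\ge1} \frac{1}{j!}\fN^{j-1}(\bullet)$ and $\fN$ raises the number of leaves by exactly one, only the summand $j=k$ contributes to the coefficient of $T$. That coefficient is therefore $\frac{1}{k!}$ times the coefficient of $T$ in $\fN^{k-1}(\bullet)$. The plan is to read off this coefficient through the pairing $\langle -,-\rangle$ and then move $\fN^{k-1}$ to the other side using the adjointness of $\fN$ and $\fP$.

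First, because $\langle F,F'\rangle = s_F\delta_{F,F'}$, the coefficient of $T$ in any $x\in k\{\cT_k\}$ equals $\langle x, T\rangle / s_T$. Next, by the corollary that $\fN^{k-1}$ and $\fP^{k-1}$ are adjoint,
\[ \langle \fN^{k-1}(\bullet), T \rangle = \langle \bullet, \fP^{k-1}(T) \rangle. \]
For $k\ge 2$, the preceding Lemma gives $\fP^{k-1}(T) = k!\cdot\bullet$, so the right-hand side equals $k!\,\langle\bullet,\bullet\rangle = k!\, s_\bullet = k!$. Hence the coefficient of $T$ in $\fN^{k-1}(\bullet)$ is $k!/s_T$, and dividing by $k!$ gives $1/s_T$. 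The case $k=1$ is immediate, since $T=\bullet$, its coefficient is $1$, and $s_\bullet = 1$.

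The only delicate point is the adjointness identity in the form actually needed, $\langle \fN(T), T'\rangle = \langle T, \fP(T')\rangle$, with the correct symmetry factors on each side. With the paper's definitions, the left side is $n(\bullet,T;T')\,s_{T'}$ and the right side is $m(\bullet,T;T')\,s_T$. These agree because of the duality theorem for $F_1=\bullet$ (where $s_\bullet=1$), $F_2=T$, $F_3=T'$.

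I would therefore restate the adjointness computation with the factors placed as above, then iterate it to $\fN^{k-1}$ and $\fP^{k-1}$ by induction on the exponent. Since $\fP^{k-1}(T)$ is exactly $k!\,\bullet$, there are no other terms in it to track, and the argument is just the chain above. As a sanity check, $\fN^3(\bullet) = 12\,C_4 + 3\,\bintree{[[[][]][[][]]]}$ gives coefficients $12/24 = 1/2$ and $3/24 = 1/8$, matching $1/s_T$ since $s_{C_4}=2$ and $s=8$ for the balanced tree.
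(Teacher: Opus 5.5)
Your proof is correct and follows the paper's argument: you isolate the degree-$k$ summand, read off the coefficient as $\langle \fN^{k-1}(\bullet), T\rangle / s_T$, move $\fN^{k-1}$ across the pairing to $\fP^{k-1}$ by adjointness, and apply the lemma $\fP^{k-1}(T) = k!\cdot\bullet$. Your symmetry factors, $n(\bullet,T;T')\,s_{T'} = m(\bullet,T;T')\,s_T$, are placed correctly (the paper's displayed adjointness computation has $s_T$ and $s_{T'}$ interchanged, which does not affect this proposition), and your separate treatment of $k=1$ covers the case the lemma excludes.
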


\begin{proof}
  If $T$ has $k$ leaves, then $T$ appears in $\fN^{k-1}(\bullet)$. Let $c_T$ be the coefficient of $T$ in $W(\bullet)$; then
  \[ \Big\langle \frac{1}{k!} \fN^{k-1}(\bullet), T \Big\rangle = c_T s_T \]
  and so
  \[ k! \cdot c_T s_T = \langle \fN^{k-1} (\bullet), T \rangle = \langle \bullet, \fP^{k-1}(T) \rangle, \]
  where the second equality is due to the adjointness of $\fN^{k-1}$ and $\fP^{k-1}$. Since $\fP^{k-1}(T) = k! \cdot \bullet$,
  \[ \langle \bullet, \fP^{k-1}(T) \rangle = k! \cdot s_\bullet = k!,\]
  so $k! \cdot c_T s_T = k!$, hence $c_T = 1 / s_T$.
\end{proof}

\section*{Acknowledgements}

I would like to thank my advisor Matilde Marcolli for her guidance, and Lo\"{i}c Foissy for providing helpful references to duality. I am also grateful to Yassine El Maazouz and Isabella Senturia for listening to my ideas at various stages of completion.

\bibliography{hopf}

@book {CartierPatras,
    AUTHOR = {Cartier, Pierre and Patras, Fr\'ed\'eric},
     TITLE = {Classical {H}opf algebras and their applications},
    SERIES = {Algebra and Applications},
    VOLUME = {29},
 PUBLISHER = {Springer, Cham},
      YEAR = {[2021] \copyright 2021},
     PAGES = {xv+268},
      ISBN = {978-3-030-77844-6; 978-3-030-77845-3},
   MRCLASS = {16T05 (16S30 16T15 16T30 16W10)},
  MRNUMBER = {4369962},
MRREVIEWER = {Stefaan\ Caenepeel},
       DOI = {10.1007/978-3-030-77845-3},
       URL = {https://doi.org/10.1007/978-3-030-77845-3},
}

@article{marcolli2015graph,
  title={Graph grammars, insertion {L}ie algebras, and quantum field theory},
  author={Marcolli, Matilde and Port, Alexander},
  journal={Mathematics in Computer Science},
  volume={9},
  number={4},
  pages={391--408},
  year={2015},
  publisher={Springer}
}

@article{oudom2008lie,
  title={On the {L}ie enveloping algebra of a pre-{L}ie algebra},
  author={Oudom, J-M and Guin, Daniel},
  journal={Journal of K-theory},
  volume={2},
  number={1},
  pages={147--167},
  year={2008},
  publisher={Cambridge University Press}
}

@article{hoffman2003combinatorics,
  title={Combinatorics of rooted trees and {H}opf algebras},
  author={Hoffman, Michael},
  journal={Transactions of the American Mathematical Society},
  volume={355},
  number={9},
  pages={3795--3811},
  year={2003}
}

@article{panaite2000relating,
  title={Relating the {C}onnes--{K}reimer and {G}rossman--{L}arson {H}opf algebras built on rooted trees},
  author={Panaite, Florin},
  journal={Letters in Mathematical Physics},
  volume={51},
  number={3},
  pages={211--219},
  year={2000},
  publisher={Springer}
}

@book{marcolli2025mathematical,
  title={Mathematical structure of syntactic merge: An algebraic model for generative {L}inguistics},
  author={Marcolli, Matilde and Chomsky, Noam and Berwick, Robert C},
  year={2025},
  publisher={MIT Press}
}

@phdthesis{bruned2015singular,
  title={Singular KPZ type equations},
  author={Bruned, Yvain},
  year={2015},
  school={Universit{\'e} Pierre et Marie Curie-Paris VI},
  note={https://theses.hal.science/tel-01306427v2/document}
}

@article{foissy2021algebraic,
  title={Algebraic structures on typed decorated rooted trees},
  author={Foissy, Lo{\"\i}c and others},
  journal={SIGMA. Symmetry, Integrability and Geometry: Methods and Applications},
  volume={17},
  pages={086},
  year={2021},
  publisher={SIGMA. Symmetry, Integrability and Geometry: Methods and Applications}
}

@article{foissy2025operads,
  title={Operads and bialgebras of multi-indices, and {N}ovikov algebras},
  author={Foissy, Lo{\"\i}c},
  journal={arXiv preprint arXiv:2510.18432},
  year={2025}
}

@article{schmitt1993hopf,
  title={Hopf algebras of combinatorial structures},
  author={Schmitt, William R},
  journal={Canadian Journal of Mathematics},
  volume={45},
  number={2},
  pages={412--428},
  year={1993},
  publisher={Cambridge University Press}
}

@article{foissy2013introduction,
  title={An introduction to {H}opf algebras of trees},
  author={Foissy, Lo{\"\i}c},
  journal={preprint},
  year={2013}
}

@incollection{connes1999hopf,
  title={Hopf algebras, renormalization and noncommutative geometry},
  author={Connes, Alain and Kreimer, Dirk},
  booktitle={Quantum field theory: perspective and prospective},
  pages={59--109},
  year={1999},
  publisher={Springer}
}

@book{yeats2017combinatorial,
  title={A combinatorial perspective on quantum field theory},
  author={Yeats, Karen},
  volume={15},
  year={2017},
  publisher={Springer}
}

@article{billera2001geometry,
  title={Geometry of the space of phylogenetic trees},
  author={Billera, Louis J and Holmes, Susan P and Vogtmann, Karen},
  journal={Advances in Applied Mathematics},
  volume={27},
  number={4},
  pages={733--767},
  year={2001},
  publisher={Elsevier}
}

@misc{OEIS,
  title        = {The {O}n-Line {E}ncyclopedia of {I}nteger {S}equences},
  author       = {OEIS Foundation Inc.},
  note         = {https://oeis.org}
}
\bibliographystyle{plain}

\end{document}